\documentclass[11pt, reqno]{amsart}
\usepackage[utf8]{inputenc}
\usepackage{indentfirst, amssymb, amsmath, amsthm, mathrsfs, setspace, indentfirst, enumerate,  mathrsfs, amsmath, amsthm}
\usepackage[colorlinks=true,linkcolor=purple, citecolor=blue,urlcolor=magenta]{hyperref}
\usepackage{graphicx}      
\usepackage{float}         
\usepackage{xcolor}        
\usepackage{colortbl}     
\usepackage{multirow}     
\usepackage{tikz}          
\definecolor{sectionlink}{RGB}{0,100,200} 
\newtheorem{ques}{Question}[section]
\newtheorem{prop}{Proposition}[section]
\newtheorem{theo}{Theorem}[section]
\newtheorem{lem}{Lemma}[section]

\newtheorem{exm}{Example}[section]
\newtheorem{defi}{Definition}[section]
\newtheorem{rem}{Remark}[section]
\newtheorem*{theoA}{Theorem A}
\newtheorem*{theoB}{Theorem B}
\newtheorem*{theoC}{Theorem C}

\numberwithin{equation}{section}
\newcommand{\beas}{\begin{eqnarray*}}
\newcommand{\eeas}{\end{eqnarray*}}
\newcommand{\bea}{\begin{eqnarray}}
\newcommand{\eea}{\end{eqnarray}}

\begin{document}

\title[Sharp Estimates for Hankel, Fekete--Szeg\"o and Zalcman Functionals for $\mathcal{S}_\mathbb{B}^{*}(\alpha)$...]{Sharp Estimates for Hankel, Fekete--Szeg\"o and Zalcman Functionals for $\mathcal{S}_\mathbb{B}^{*}(\alpha)$ in Complex Banach Spaces}
\author[ N. Sarkar and P. Das]{Nabadwip Sarkar and Pradip Das}
\address{Amity School of Applied Sciences, Amity University Mumbai, Panvel, Navi Mumbai, Maharashtra-410206, India}
\email{nsarkar@mum.amity.edu, nabadwipsarkar52@gmail.com}
\address{Department of Mathematics, Raiganj University, Raiganj, West Bengal-733134, India.}
\email{pradipsmath@gmail.com}

\makeatletter
\@namedef{subjclassname@2020}{\textup{2020} Mathematics Subject Classification}
\makeatother

\subjclass[2020]{Primary 32H02,30C45, 30C50; Secondary 46G20, 46B20, 30C45.}
\keywords{Complex Banach space, starlike mapping of order $\alpha$, second-order Hankel determinant, Fekete--Szeg\"o functional, Zalcman functional, sharp estimates.}
\begin{abstract}
Inspired by the sharp coefficient estimates established by Cho \emph{et al.}\cite{CKKLS2018} for starlike functions of order $\alpha$ in the unit disk, we investigate the corresponding problems for starlike mappings of order $\alpha$ defined on the unit ball of a complex Banach space. Employing Fr\'echet derivatives together with suitable auxiliary lemmas, we establish sharp upper bounds for the second-order Hankel determinant, the Fekete--Szeg\"o functional, and the Zalcman functional associated with this class of mappings. In each case, the obtained estimates are shown to be sharp by identifying the corresponding extremal mappings. Furthermore, our results reduce to the known one-dimensional sharp estimates when the underlying Banach space is the complex plane, thereby extending several classical results of Cho \emph{et al.} \cite{CKKLS2018}  to the setting of complex Banach spaces.
\end{abstract}

\maketitle

\section{{\bf Introduction and Preliminaries.}}

Let $\mathcal{A}$ denote the family of analytic functions in the open unit disk $\mathbb{U}=\{z\in\mathbb{C}:|z|<1\},$
normalized by
\begin{equation}\label{eq:A}
f(z)=z+\sum_{m=2}^{\infty}a_mz^m.
\end{equation}
Furthermore, let $\mathcal{S}\subset\mathcal{A}$ be the class of normalized univalent functions, and let $\mathcal{K}$ denote the subclass of $\mathcal{S}$ consisting of convex functions.

Throughout this paper, we shall frequently use the classical Carath\'eodory class, denoted by $\mathcal{P}$, which consists of analytic functions $p$ satisfying
\[
p(0)=1
\quad\text{and}\quad
\operatorname{Re}p(z)>0,\qquad z\in\mathbb{U}.
\]
Every function $p\in\mathcal{P}$ admits the Taylor expansion
\begin{equation}\label{eq:P}
p(z)=1+\sum_{m=1}^{\infty}c_mz^m
=1+c_1z+c_2z^2+c_3z^3+\cdots,
\qquad z\in\mathbb{U}.
\end{equation}

Let $X$ be a complex Banach space endowed with the norm $\|\cdot\|$, and let
\[
\mathbb{B}=\{x\in X:\|x\|<1\}
\]
be its open unit ball. We denote by $\mathcal{L}(X,Y)$ the Banach space of all bounded linear operators from $X$ into another complex Banach space $Y$. The identity operator on $X$ is denoted by $I$.

For each nonzero vector $x\in X$, define
\[
T(x)=
\left\{
T_x\in\mathcal{L}(X,\mathbb{C}):
T_x(x)=\|x\|,
\ \|T_x\|=1
\right\}.
\]
The Hahn--Banach theorem guarantees that $T(x)$ is nonempty. Moreover, for every $\xi\in\mathbb{C}\setminus\{0\}$,
\[
T_{\xi x}(\cdot)=\frac{|\xi|}{\xi}\,T_x(\cdot),
\]
which establishes a one-to-one correspondence between $T(\xi x)$ and $T(x)$.

Let $\mathcal{H}(\mathbb{B})$ denote the family of all holomorphic mappings from $\mathbb{B}$ into $X$. If $F\in\mathcal{H}(\mathbb{B})$, then for every $x\in \mathbb{B}$, the Fr\'echet--Taylor expansion is given by
\[
F(y)
=
\sum_{n=0}^{\infty}
\frac{1}{n!}
D^nF(x)\bigl((y-x)^n\bigr),
\]
for all $y$ sufficiently close to $x$, where
\[
D^nF(x)\bigl((y-x)^n\bigr)
=
D^nF(x)(y-x,\ldots,y-x),
\]
and $D^nF(x)$ denotes the $n$th Fr\'echet derivative of $F$ at $x$. Each $D^nF(x)$ is a bounded symmetric $n$-linear operator from $X^n$ into $X$.

A mapping $F\in\mathcal{H}(\mathbb{B})$ is called \emph{biholomorphic} if $F(\mathbb{B})$ is a domain in $X$ and the inverse mapping
\[
F^{-1}:F(\mathbb{B})\rightarrow \mathbb{B}
\]
exists and is holomorphic. Moreover, $F$ is said to be \emph{locally biholomorphic} whenever the Fr\'echet derivative $DF(x)$ is invertible with bounded inverse for every $x\in \mathbb{B}$.

A holomorphic mapping $F:\mathbb{B}\rightarrow X$ is said to be \emph{normalized} if
\[
F(0)=0
\quad\text{and}\quad
DF(0)=I.
\]
Several important subclasses of normalized biholomorphic mappings in Banach spaces can be found in the monograph~\cite{GK2003}.

For later use, let $x_0\in X$ satisfy $\|x_0\|=1$ and choose $T_{x_0}\in T(x_0)$. We introduce the quantities $A_1=1$
and, for $n=2,3,4,5$,
\begin{equation}\label{eq:An}
A_n=\frac{1}{n!}T_{x_0}\left(D^nF(0)(x_0,\ldots,x_0)
\right),
\end{equation}
where the vector $x_0$ appears exactly $n$ times in the multilinear form $D^nF(0)$.

The following proposition and defination are the well-known criterion for starlikeness on the unit
ball in a complex Banach space.

\begin{prop}[{\cite[Proposition 1.1]{Suffridge1973}}]
Let $F : \mathbb{B}\to X$ be a locally biholomorphic mapping. Then $F$ is a starlike mapping on $B$ if and only if
\begin{equation}\label{p1}
\operatorname{Re}\left\{T_x\!\left((DF(x))^{-1}F(x)\right)\right\}>0,
\quad x\in \mathbb{B}\setminus\{0\},\; T_x\in T(x).
\end{equation}
\end{prop}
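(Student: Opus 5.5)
We would prove Suffridge's criterion by working along complex lines and using the semigroup generated by $F^{-1}$ near the origin. Recall that $F$ is starlike when it is biholomorphic, $F(0)=0$, and $F(\mathbb{B})$ is starlike with respect to $0$. Writing $w(x)=(DF(x))^{-1}F(x)$, starlikeness is equivalent to $w$ being a semicomplete vector field that "points inward". Concretely, for each $x\in\mathbb{B}$ consider the flow $v(t,x)=F^{-1}\bigl(e^{-t}F(x)\bigr)$, $t\ge 0$. Differentiating gives $\partial v/\partial t=-(DF(v))^{-1}F(v)=-w(v)$. Thus starlikeness of $F(\mathbb{B})$ says exactly that this flow stays in $\mathbb{B}$ for all $t\ge0$. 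The task is to translate that statement into the sign condition (1.5).

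\emph{Necessity.} Assume $F$ is starlike, and fix $x\neq 0$ and $T_x\in T(x)$.
\begin{itemize}
\item For $0<r\le1$ the map $F_r(y)=F^{-1}(rF(y))$ is a holomorphic self-map of $\mathbb{B}$ fixing $0$. By the Schwarz lemma in Banach spaces (via $T_x\circ F_r$ restricted to the disc $\zeta\mapsto \zeta x/\|x\|$), we get $\|F_r(y)\|\le\|y\|$.
\item Hence $\operatorname{Re}T_x\bigl(F^{-1}(e^{-t}F(x))\bigr)\le \|F^{-1}(e^{-t}F(x))\|\le\|x\|=\operatorname{Re}T_x(x)$.
\item Differentiating at $t=0^+$ gives $\operatorname{Re}T_x(w(x))\ge0$.
\end{itemize}
To upgrade $\ge$ to strict $>$, we would apply the same reasoning to the one-variable function $p(\zeta)=T_x\bigl(w(\zeta x/\|x\|)\bigr)\,\|x\|/\zeta$. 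This function is holomorphic on $\mathbb{U}$, since $w(0)=0$ and $Dw(0)=I$, and it satisfies $p(0)=1$ and $\operatorname{Re}p\ge0$ on the disc. A nonconstant or constant-one function with nonnegative real part and $p(0)=1$ has $\operatorname{Re}p>0$ by the minimum principle for harmonic functions. This needs care: the functional must be varied with $\zeta$, using $T_{\zeta x}=\frac{|\zeta|}{\zeta}T_x$ as noted in the paper.

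\emph{Sufficiency.} Assume (1.5).
\begin{itemize}
\item For $x\in\mathbb{B}$, solve the initial value problem $\dot v=-w(v)$, $v(0)=x$. This has a local holomorphic solution since $w$ is holomorphic.
\item Let $\rho(t)=\|v(t)\|$. The function $\rho$ is Lipschitz, and at points of differentiability a standard lemma (Kato's lemma: $\frac{d}{dt}\|v(t)\|=\operatorname{Re}T_{v}(\dot v)$ for a suitable $T_v\in T(v)$) gives $\rho'(t)=-\operatorname{Re}T_{v}(w(v))<0$.
\item So $v$ stays in the closed ball of radius $\|x\|$, and the solution exists for all $t\ge0$.
\item Along the flow, $F(v(t))$ satisfies $\frac{d}{dt}F(v)=-DF(v)w(v)=-F(v)$. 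Hence $F(v(t))=e^{-t}F(x)$, so $e^{-t}F(x)\in F(\mathbb{B})$ for all $t\ge0$. This shows $F(\mathbb{B})$ is starlike.
\end{itemize}

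\emph{Univalence.} We must still show $F$ is injective. Suppose $F(x_1)=F(x_2)$. Running the flow from both points gives trajectories with equal $F$-images. Near $0$, $F$ is injective because $DF(0)=I$ (inverse function theorem). Strict decrease of the norm together with the estimate $\operatorname{Re}T_x(w(x))\ge c\|x\|$ near $0$ forces both trajectories into that small neighbourhood, where they must coincide. By uniqueness of solutions for the backward ODE with locally biholomorphic $F$, we conclude $x_1=x_2$.

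The main obstacle is this global univalence step, in particular the uniform convergence $v(t)\to0$ and the backward-uniqueness argument. The first thing to try is to obtain a quantitative estimate $\operatorname{Re}T_x(w(x))\ge \|x\|\frac{1-\|x\|}{1+\|x\|}$ from a Harnack inequality for the Carathéodory-type function $p$ above, which gives exponential decay of $\|v(t)\|$.
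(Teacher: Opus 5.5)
The paper gives no proof of this proposition; it is quoted from Suffridge. So your outline has to stand on its own.

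\textbf{What is already fine.} Your necessity half is sound: the Schwarz lemma for $F^{-1}\bigl(e^{-t}F(\cdot)\bigr)$, differentiation at $t=0^+$, then the minimum principle for $p(\zeta)=T_x\bigl(w(\zeta u)\bigr)/\zeta$ with $u=x/\|x\|$, using $\frac{|\zeta|}{\zeta}T_x\in T(\zeta u)$. Drop the extra factor $\|x\|$ in your $p$; as written, $p(0)=\|x\|$, not $1$.

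The univalence step you single out as the main obstacle is in fact settled by your own sketch. Harnack's inequality $\operatorname{Re}p(\zeta)\ge(1-|\zeta|)/(1+|\zeta|)$ gives $\|v(t)\|\le\|x\|e^{-ct}$ with $c=(1-\|x\|)/(1+\|x\|)$ for each trajectory separately, so no uniform convergence is needed. Once $v(t,x_1)=v(t,x_2)$ for a single $t$, ordinary Picard--Lindel\"of uniqueness for the locally Lipschitz field $w$ on $[0,t]$ gives $x_1=x_2$.

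\textbf{The genuine gap.} It is the sentence ``$v$ stays in the closed ball of radius $\|x\|$, and the solution exists for all $t\ge0$.'' That is the compactness argument from $\mathbb{C}^n$, but the proposition is stated for an arbitrary complex Banach space. There, a maximal solution of a locally Lipschitz ODE can stay bounded and still break down in finite time, unless the field is bounded on the region the trajectory occupies.

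Holomorphy alone does not give that boundedness. A holomorphic function on $\mathbb{B}$ need not be bounded on $r\overline{\mathbb{B}}$ for $r<1$: for instance, $x\mapsto\sum_n(2x_n)^n$ is holomorphic on all of $c_0$ but unbounded on $\frac12\overline{\mathbb{B}}$. Your Harnack bound controls only the radial part $\operatorname{Re}T_v(w(v))$, not $\|w(v)\|$. Both $e^{-t}F(x)\in F(\mathbb{B})$ and your univalence argument rest on global existence, so the sufficiency half is incomplete as written.

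\textbf{How to close it.} What is missing is an a priori bound on $\|w\|$ over $r\overline{\mathbb{B}}$, extracted from the positivity hypothesis itself. One way:
\begin{enumerate}
\item Expand $w=\sum_{m\ge1}Q_m$ with $Q_m=\frac{1}{m!}D^mw(0)(\cdot^m)$ and $Q_1=I$. This converges pointwise on the balanced set $\mathbb{B}$.
\item The Carath\'eodory coefficient bound applied to $p_u(\zeta)=\sum_{m\ge1}T_u(Q_m(u))\zeta^{m-1}$ shows that each $Q_m$ has numerical radius $\nu(Q_m)\le 2$.
\item Harris's inequality $\|Q_m\|\le m^{m/(m-1)}\nu(Q_m)$ for homogeneous polynomials then gives $\|w(y)\|\le\|y\|+2\sum_{m\ge2}m^{m/(m-1)}\|y\|^m$, which is finite on $r\overline{\mathbb{B}}$.
\item Consequently $v$ is Lipschitz on its maximal interval. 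At a finite endpoint it converges to a point of $\|x\|\overline{\mathbb{B}}\subset\mathbb{B}$ and can be continued, a contradiction.
\end{enumerate}
After this, the rest of your argument goes through.

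A smaller point: you tacitly use $F(0)=0$ to get $w(0)=0$ and $Dw(0)=I$. This follows from the hypothesis by testing it at $x=-\varepsilon w(0)$ for small $\varepsilon$.
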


\begin{defi}[{\cite[Definition 1.2]{HKL2001}}]
Let $F : \mathbb{B}\to X$ be a normalized locally biholomorphic mapping and let
$0<\alpha<1$. We say that $F$ is a \emph{starlike mapping of order $\alpha$} if
\[
\left|
\frac{1}{\|x\|}
T_x\!\left((DF(x))^{-1}F(x)\right)
-\frac{1}{2\alpha}
\right|
<
\frac{1}{2\alpha},
\quad
x\in \mathbb{B}\setminus\{0\},\;
T_x\in T(x).
\]

Equivalently,
\begin{equation}\label{d1}
\operatorname{Re}\left\{
\frac{1}{\|x\|}
T_x\!\left((DF(x))^{-1}F(x)\right)
\right\}
>\alpha,
\quad
x\in \mathbb{B}\setminus\{0\},\;
T_x\in T(x).
\end{equation}

Let $\mathcal{S}_\mathbb{B}^{*}(\alpha)$ denote the class of starlike mappings of order $\alpha$ on $B$.
When $X=\mathbb{C}$ and $B=\mathbb{U}$, the class $\mathcal{S}_\mathbb{B}^{*}(\alpha)$ is denoted by
$\mathcal{S}^{*}(\alpha)$.
\end{defi}

\begin{rem}
When $X=\mathbb{C}$ and $B=\mathbb{U}$, the relations \eqref{p1} and \eqref{d1}
reduce to the classical analytic characterizations
\[
\operatorname{Re}\left(\frac{zf'(z)}{f(z)}\right)>0,
\qquad z\in\mathbb{U},
\]
and
\[
\operatorname{Re}\left(\frac{zf'(z)}{f(z)}\right)>\alpha,
\qquad
0<\alpha<1,\; z\in\mathbb{U},
\]
which characterize starlike functions and starlike functions of order $\alpha$ in the unit disk, respectively.
\end{rem}

We first recall the definitions of the Hankel determinants for functions in the class $\mathcal{A}$.

\begin{defi}
Let \(f\in\mathcal{A}\) be given by (\ref{eq:A}). Then, for integers \(q\geq 1\) and \(n\geq 0\), the \(q\)th Hankel determinant is defined by
\[
H_{q,n}(f):=
\begin{vmatrix}
a_n & a_{n+1} & \cdots & a_{n+q-1}\\
a_{n+1} & a_{n+2} & \cdots & a_{n+q}\\
\vdots & \vdots & \ddots & \vdots\\
a_{n+q-1} & a_{n+q} & \cdots & a_{n+2q-2}
\end{vmatrix}.
\]
In particular,
\[
H_{2,2}(f)=a_2a_4-a_3^2 \;\text{and}\;\;H_{2,1}(f)=a_3-a_2^2.
\]
\end{defi}

In recent years, the problem of determining sharp upper bounds for the Hankel determinant
\(|H_{q,n}(f)|\) has attracted considerable attention in geometric function theory. In particular,
the functional $H_{2,1}(f)=a_3-a_2^2,$ coincides with the classical Fekete--Szeg\"o functional introduced by Fekete and Szeg\"o
\cite{FS1933}. For the class $\mathcal{S}$, this functional was first estimated by Bieberbach
(see \cite[Vol.~I, p.~35]{Goodman1983}). Subsequently, Pommerenke \cite{Pommerenke1966} established a fundamental
result for the class $\mathcal{S}$, which stimulated extensive research on analogous coefficient
problems for various subclasses of univalent functions. More recently, considerable effort has
been devoted to obtaining sharp estimates for the second Hankel determinant $H_{2,2}(f)=a_2a_4-a_3^2,$
and several significant results have been reported in the literature (see, for example,
\cite{CKKLS2018,CKKLS2017,XDL2026,XHX2025}).\\

We now turn our attention to the Zalcman functional, its connection with the celebrated Zalcman conjecture and the generalized formulation introduced by Ma \cite{M1999}. \\
In the early 1970s, Lawrence Zalcman conjectured that if \(f\in\mathcal{S}\) is of the form (\ref{eq:A}), then
\[
\left|a_n^2-a_{2n-1}\right|\le (n-1)^2,\qquad n\ge 2.
\]
Moreover, equality is attained by the Koebe function $k(z)=\frac{z}{(1-z)^2},\;\; z\in\mathbb{D},$
and its rotations.

\begin{defi}
Let $f(z)=z+\sum_{n=2}^{\infty}a_nz^n\in\mathcal{A}.$
For integers \(m,n\ge2\), the \emph{generalized Zalcman functional} is defined by
$J_{m,n}(f):=a_ma_n-a_{m+n-1}.$
In particular, $J_{2,3}(f)=a_2a_3-a_4.$
\end{defi}

Ma~\cite{M1999} conjectured that if \(f\in\mathcal{S}\), then
\[
|J_{m,n}(f)|\le (m-1)(n-1), \qquad m,n\ge2.
\]
Furthermore, he proved this conjecture for the class \(\mathcal{S}^{*}\), and also for functions in \(\mathcal{S}\) whose Taylor coefficients are all real.\\

\medskip
In 2018, Cho \emph{et al.}~\cite{CKKLS2018} established the following sharp estimates for the second-order Hankel determinant $H_{2,2}(f)$ and the Fekete--Szeg\"o functional $H_{2,1}(f)$ in the class of starlike functions of order $\alpha$.

\begin{theoA}\cite[Theorem 2.3]{CKKLS2018}
If $f(z)=z+\sum_{n=2}^{\infty}a_nz^n
\in\mathcal{S}^*(\alpha),
\;0\leq \alpha<1,$
then
\[
|a_2a_4-a_3^2|\le (1-\alpha)^2,
\]
and the estimate is sharp.
\end{theoA}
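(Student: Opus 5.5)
We prove Theorem A by writing the coefficients of $f$ in terms of those of a Carathéodory function and then bounding the resulting expression in $c_1,c_2,c_3$. Since $f\in\mathcal{S}^*(\alpha)$, the function
\[
p(z)=\frac{1}{1-\alpha}\left(\frac{zf'(z)}{f(z)}-\alpha\right)=1+c_1z+c_2z^2+c_3z^3+\cdots
\]
belongs to $\mathcal{P}$. Put $\beta=1-\alpha$ and compare coefficients in $zf'(z)=f(z)\bigl(\alpha+\beta p(z)\bigr)$. This gives
\[
a_2=\beta c_1,\qquad 2a_3=\beta c_2+\beta^2c_1^2,\qquad 3a_4=\beta c_3+\tfrac{3}{2}\beta^2c_1c_2+\tfrac12\beta^3c_1^3 .
\]
Substituting, we write $a_2a_4-a_3^2$ as a polynomial in $c_1,c_2,c_3$ with coefficients depending on $\beta$. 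The $\beta^4c_1^4$ terms give $\tfrac16-\tfrac14=-\tfrac1{12}$, and the result is
\[
a_2a_4-a_3^2=\frac{\beta^2}{12}\Bigl(4c_1c_3-3c_2^2+0\cdot\beta c_1^2c_2-\beta^2c_1^4\Bigr)
\]
(the $c_1^2c_2$ coefficient comes out as $6\beta-6\beta=0$; we will recheck this).

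Next we normalize. Since $\mathcal{P}$ and the functional are rotation invariant, we may assume $c_1=c\in[0,2]$. We then use the standard Libera--Złotkiewicz representation:
\[
2c_2=c^2+\zeta(4-c^2),
\]
\[
4c_3=c^3+2c\zeta(4-c^2)-c\zeta^2(4-c^2)+2(4-c^2)(1-|\zeta|^2)\eta,
\]
with $|\zeta|\le1$ and $|\eta|\le1$. Substituting these into the bracket and applying the triangle inequality in $\eta$, we get
\[
|a_2a_4-a_3^2|\le \frac{\beta^2}{12}\,\Psi(c,|\zeta|),
\]
where $\Psi$ is the bound obtained with the $\eta$-term estimated by its modulus. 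The aim is to show $\Psi\le 12$ on $[0,2]\times[0,1]$.

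The main obstacle is this two-variable maximization, because the coefficients depend on $\beta$. Our plan is to treat $x=|\zeta|$ as fixed, check monotonicity or convexity of $\Psi$ in $x$, and thereby reduce to the boundary values $x=0$ and $x=1$. We then maximize in $c$ by elementary calculus. We expect the maximum to be attained at $c=0$, $x=1$, where $c_2=2\zeta$ gives $3|c_2|^2=12$ and hence the bound $\beta^2=(1-\alpha)^2$. The case $c=2$ gives only $\tfrac{\beta^2}{12}\,|32-12-16\beta^2|$, which is at most $\tfrac{20}{12}\beta^2$. If $\Psi$ is not monotone, we fall back on interior critical-point analysis, as in Cho et al.

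For sharpness we take $p(z)=\frac{1+z^2}{1-z^2}$, which gives $c_1=c_3=0$ and $c_2=2$. The corresponding function is $f(z)=z(1-z^2)^{-(1-\alpha)}$, with $a_2=a_4=0$ and $a_3=1-\alpha$, so $|a_2a_4-a_3^2|=(1-\alpha)^2$.
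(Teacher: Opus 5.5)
Theorem A is not proved in the paper; it is quoted from Cho et al. Your route is the standard one for this estimate: write $a_n$ through $p\in\mathcal{P}$, use the Libera--Z{\l}otkiewicz formulas for $c_2,c_3$, then apply the triangle inequality in $\eta$.

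Your preliminary work is correct. The identity $a_2a_4-a_3^2=\frac{\beta^2}{12}\bigl(4c_1c_3-3c_2^2-\beta^2c_1^4\bigr)$ holds (the $c_1^2c_2$ terms do cancel), and so does your extremal function $z(1-z^2)^{-(1-\alpha)}$.

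The gap is that the step which actually proves the theorem, $\Psi\le 12$, is never carried out; you only describe a plan with a fallback. The one explicit check you make is also wrong, in a way that contradicts the statement. At $c=2$ one has $c_1=c_2=c_3=2$, so $4c_1c_3=16$, not $32$. The correct value is $\frac{\beta^2}{12}|4-16\beta^2|=\frac{\beta^2}{3}|1-4\beta^2|\le\beta^2$, with equality at $\alpha=0$ (the Koebe function). Your figure $\frac{20}{12}\beta^2$ exceeds $(1-\alpha)^2$, so if it were right the theorem would be false. It cannot be dismissed with ``only''.

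The missing estimate is short. Substituting the parametrization gives
\[
\begin{aligned}
4\bigl(4c_1c_3-3c_2^2-\beta^2c^4\bigr)&=(1-4\beta^2)c^4+2c^2(4-c^2)\zeta-(4-c^2)(12+c^2)\zeta^2\\
&\quad+8c(4-c^2)(1-|\zeta|^2)\eta .
\end{aligned}
\]
With $\rho=|\zeta|\in[0,1]$ and $|\eta|\le 1$, this yields
\[
|a_2a_4-a_3^2|\le\frac{\beta^2}{48}\Bigl[|1-4\beta^2|c^4+8c(4-c^2)+2c^2(4-c^2)\rho+(4-c^2)(c-2)(c-6)\rho^2\Bigr].
\]
Both the $\rho$- and $\rho^2$-coefficients are nonnegative for $c\in[0,2]$, since $(c-2)(c-6)\ge 0$ there. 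So the bound increases in $\rho$, and no interior critical-point analysis is needed.

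At $\rho=1$ the bracket collapses to $48-\bigl(3-|1-4\beta^2|\bigr)c^4$. This is at most $48$ because $0<\beta\le 1$ forces $|1-4\beta^2|\le 3$. Hence $|a_2a_4-a_3^2|\le(1-\alpha)^2$. Equality holds at $c=0$, $|\zeta|=1$ (your extremal function), and, when $\alpha=0$, also at $c=2$.
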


\begin{theoB}\cite[Theorem 2.5]{CKKLS2018}
If $f(z)=z+\sum_{n=2}^{\infty}a_nz^n
\in\mathcal{S}^*(\alpha),
\;0\leq \alpha<1,$
then
\[
|a_3-a_2^2|\le 1-\alpha,
\]
and the estimate is sharp.
\end{theoB}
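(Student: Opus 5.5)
Theorem B is a one-variable statement, so I will use only the Carathéodory class $\mathcal{P}$ and the classical coefficient relations; the Banach-space machinery is not needed here.

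\textbf{Step 1: reduce to $\mathcal{P}$.} For $f\in\mathcal{S}^*(\alpha)$ define $p$ by
\[
\frac{zf'(z)}{f(z)}=\alpha+(1-\alpha)p(z).
\]
Then $p\in\mathcal{P}$, and we write $p(z)=1+c_1z+c_2z^2+\cdots$ as in \eqref{eq:P}. Comparing coefficients in $zf'(z)=f(z)\bigl(\alpha+(1-\alpha)p(z)\bigr)$ gives
\[
a_2=(1-\alpha)c_1,\qquad 2a_3=(1-\alpha)c_2+(1-\alpha)c_1a_2 .
\]
Hence
\[
a_3=\tfrac{1-\alpha}{2}\bigl(c_2+(1-\alpha)c_1^2\bigr),
\qquad
a_3-a_2^2=\tfrac{1-\alpha}{2}\bigl(c_2-\mu c_1^2\bigr),
\]
where $\mu=1-\alpha\in(0,1]$.

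\textbf{Step 2: the coefficient inequality.} I will use the standard sharp estimate, valid for every $p\in\mathcal{P}$ and every complex $\mu$:
\[
|c_2-\mu c_1^2|\le 2\max\{1,|2\mu-1|\}.
\]
It follows from the representation $c_2=\tfrac12 c_1^2+\tfrac12(4-|c_1|^2)\zeta$ with $|\zeta|\le1$ (Libera--Złotkiewicz).

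Since $0<\mu\le1$, we have $|2\mu-1|\le1$, so $|c_2-\mu c_1^2|\le2$. This gives $|a_3-a_2^2|\le 1-\alpha$.

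If one prefers a self-contained argument, I would do it directly. Set $c_1=c\in[0,2]$, which is possible after a rotation. Then
\[
|c_2-\mu c^2|\le \bigl|\tfrac12-\mu\bigr|c^2+\tfrac12(4-c^2).
\]
This expression is a function of $c^2$ with slope $|\tfrac12-\mu|-\tfrac12\le0$, so it is maximized at $c=0$, where its value is $2$. This short monotonicity check is the only point needing care. The main subtlety is making sure the relevant sign condition $\mu\in[0,1]$ holds, and it does for every $0\le\alpha<1$.

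\textbf{Step 3: sharpness.} Take $p(z)=\dfrac{1+z^2}{1-z^2}$, so $c_1=0$ and $c_2=2$. The corresponding $f$ is
\[
f(z)=\frac{z}{(1-z^2)^{1-\alpha}},
\]
which has $a_2=0$ and $a_3=1-\alpha$. Hence $|a_3-a_2^2|=1-\alpha$, and equality is attained.
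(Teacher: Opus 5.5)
Your proof is correct. The paper gives no proof of Theorem B of its own; it quotes the result from Cho et al. Your argument nonetheless uses exactly the ingredients the paper relies on. The first is the coefficient formulas of Lemma~\ref{L3}, which give $a_3-a_2^2=\frac{1-\alpha}{2}\bigl(c_2-(1-\alpha)c_1^2\bigr)$. The second is the extremal function $z/(1-z^2)^{1-\alpha}$, which is the planar case of the mapping \eqref{ex1} in the paper's sharpness example. What the paper leaves to the citation, namely the bound $|c_2-\mu c_1^2|\le 2$ for $0\le\mu\le 1$, you supply cleanly via the Libera--Zlotkiewicz representation.
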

In the same paper, Cho \emph{et al.}~\cite{CKKLS2018} established the following sharp estimates for the Zalcman functional $J_{2,3}(f)$ in the class of starlike functions of order $\alpha$.
\begin{theoC}\cite[Theorem 2.1]{CKKLS2018}
If $f(z)=z+\sum_{n=2}^{\infty}a_nz^n
\in\mathcal{S}^*(\alpha),
\;0\leq \alpha<1,$
then
\[
|a_2a_3-a_4|\le \begin{cases} \frac{2}{3}(1-\alpha)[4(1-\alpha)^2-1], \;\text{for}\; & 0\leq \alpha\leq  1-\frac{\sqrt 3}{2}\\
         \frac{2(1-\alpha)}{3\sqrt{1-(1-\alpha)^2}} \;\text{for}\; &  1-\frac{\sqrt 3}{2}<\alpha <1
\end{cases}
\]
and the estimate is sharp.
\end{theoC}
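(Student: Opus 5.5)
We prove Theorem C by reducing $a_2a_3-a_4$ to a single functional of Carath\'eodory coefficients and then maximizing it via the Schur-type parametrization of $c_1,c_2,c_3$. Put $\beta=1-\alpha\in(0,1]$.

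\textbf{Step 1: reduction to $c_3-\beta^2c_1^3$.} Since $f\in\mathcal{S}^*(\alpha)$, there is $p\in\mathcal{P}$ as in (\ref{eq:P}) with
\[
\frac{zf'(z)}{f(z)}=\alpha+\beta p(z).
\]
Comparing coefficients in $zf'(z)=f(z)\bigl(\alpha+\beta p(z)\bigr)$ gives the recursion $(n-1)a_n=\beta\sum_{k=1}^{n-1}c_{n-k}a_k$. This yields
\[
a_2=\beta c_1,\qquad a_3=\tfrac{\beta}{2}\bigl(c_2+\beta c_1^2\bigr),\qquad a_4=\tfrac{\beta}{3}\Bigl(c_3+\tfrac{3\beta}{2}c_1c_2+\tfrac{\beta^2}{2}c_1^3\Bigr).
\]
Substituting, the $c_1c_2$ terms cancel exactly, leaving
\[
a_2a_3-a_4=-\frac{\beta}{3}\bigl(c_3-\mu c_1^3\bigr),\qquad \mu=\beta^2\in(0,1].
\]
So the theorem reduces to computing $\max_{p\in\mathcal{P}}|c_3-\mu c_1^3|$. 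The expected value is $2(4\mu-1)$ for $\mu\ge 3/4$ and $2/\sqrt{1-\mu}$ for $\mu<3/4$. The threshold $\mu=3/4$ is exactly $\alpha=1-\sqrt3/2$, and multiplying by $\beta/3$ reproduces both formulas in Theorem C.

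\textbf{Step 2: parametrization and a two-variable bound.} I will use the Libera--Z\l otkiewicz parametrization:
\[
c_1=2\zeta_1,\qquad c_2=2\zeta_1^2+2(1-|\zeta_1|^2)\zeta_2,
\]
\[
c_3=2\zeta_1^3+4(1-|\zeta_1|^2)\zeta_1\zeta_2-2(1-|\zeta_1|^2)\bar\zeta_1\zeta_2^2+2(1-|\zeta_1|^2)(1-|\zeta_2|^2)\zeta_3,
\]
with $\zeta_j\in\overline{\mathbb{U}}$. By rotation invariance of the functional I may take $\zeta_1=t\in[0,1]$. The endpoints $t=0,1$ are handled directly. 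For $0<t<1$, the triangle inequality in $\zeta_3$ gives
\[
|c_3-\mu c_1^3|\le 2\Bigl(\bigl|t^3(1-4\mu)+2t(1-t^2)\zeta_2-t(1-t^2)\zeta_2^2\bigr|+(1-t^2)(1-|\zeta_2|^2)\Bigr).
\]
It remains to maximize the right-hand side over $t\in[0,1]$ and $\zeta_2=re^{i\theta}\in\overline{\mathbb{U}}$.

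\textbf{Step 3: the maximization (main obstacle).} I expect this to be the hardest step. First I will fix $r$ and $t$ and maximize in $\theta$. The modulus squared is a quadratic polynomial in $\cos\theta$, so its maximum is at $\cos\theta=\pm1$ or at an interior critical point. The sign of $1-4\mu$ governs which occurs. Then I optimize in $r$ and $t$.

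For $\mu\ge 3/4$, I expect the maximum at $t=1$, giving $2(4\mu-1)$; I would verify that the bound is increasing in $t$ there.

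For $\mu<3/4$, I expect an interior critical point producing $2/\sqrt{1-\mu}$. If this case analysis becomes unwieldy, I would instead invoke the known lemma of Choi--Kim--Sugawa type giving sharp bounds for $|Ac_1^3+Bc_1c_2+Cc_3|$ over $\mathcal{P}$, with $B=0$ and $A=-\mu$. That lemma already encodes the same region split.

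\textbf{Step 4: sharpness.} For $\alpha\le 1-\sqrt3/2$, equality is attained by $p(z)=(1+z)/(1-z)$, that is $f(z)=z/(1-z)^{2\beta}$, since then $|c_3-\mu c_1^3|=2|1-4\mu|$. For the other range, equality comes from the $p$ obtained by choosing $\zeta_1=t_0$ at the interior optimum, $\zeta_2$ at the optimizing point, and $|\zeta_3|=1$ with argument aligned. The corresponding $f$ is recovered from $zf'/f=\alpha+\beta p$.
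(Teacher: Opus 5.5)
The paper contains no proof of Theorem~C. It quotes the result from Cho et al.\ and uses it as a black box; its own work is the transfer to $\mathbb{B}$ through Lemma~\ref{L4}, which identifies $A_n$ with the coefficients $a_n$ of $f(\xi)=g(\xi x_0)\xi$, together with the extremal mappings. You instead reconstruct the one-variable argument, and the route is sound. Your Step~1 coefficients agree with Lemma~\ref{L3}, and the cancellation $a_2a_3-a_4=-\frac{\beta}{3}(c_3-\beta^2c_1^3)$ is correct. The Schur-parameter formulas and the bound in Step~2 are right. Your target values $2(4\mu-1)$ and $2/\sqrt{1-\mu}$, with threshold $\mu=\frac34$, reproduce both branches of Theorem~C. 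The citation keeps the paper short. Your route makes the result self-contained and explains the threshold $\alpha=1-\frac{\sqrt3}{2}$ as the value at which an interior critical point in $t$ reaches $t=1$.

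Step~3 is still only a plan, but it does close. Let $\Psi(t)$ be the maximum of your bracket over $\zeta_2\in\overline{\mathbb{U}}$, and put $g(t)=3t-4(1-\mu)t^3$.
\begin{itemize}
\item For $0\le t\le\frac12$ one finds $\Psi(t)=1+4\mu t^3\le g(\frac12)$.
\item For $\frac12\le t\le1$ and $\mu\ge\frac14$ one has $\Psi(t)=g(t)$, attained at $\zeta_2=-1$.
\item For $\mu<\frac14$ the formula $\Psi=g$ holds only on $\frac12\le t\le(4-12\mu)^{-1/2}$. Beyond that point the maximum sits at a non-real point of $|\zeta_2|=1$, and $\Psi(t)=(1-4\mu t^2)^{3/2}/\sqrt{1-4\mu}$. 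This joins $g$ continuously and is decreasing, so it never exceeds the maximum of $g$.
\end{itemize}
The critical point of $g$ is $t_0=1/(2\sqrt{1-\mu})$, with $g(t_0)=2t_0=1/\sqrt{1-\mu}$, and $t_0\le1$ exactly when $\mu\le\frac34$. Otherwise $g$ increases up to $g(1)=4\mu-1$. So your expectations are confirmed.

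Two small corrections.
\begin{itemize}
\item The Choi--Kim--Sugawa lemma evaluates $Y(A,B,C)=\max_{|z|\le1}\bigl(|A+Bz+Cz^2|+1-|z|^2\bigr)$. It therefore enters after your triangle inequality, with $A=t^3(1-4\mu)/(1-t^2)$, $B=2t$, $C=-t$. It does not apply directly on $\mathcal{P}$ with $A=-\mu$, $B=0$. The sign change of $AC$ at $\mu=\frac14$ is what produces the extra sub-case above.
\item In Step~4 the optimum has $|\zeta_2|=1$, so $\zeta_3$ plays no role and the extremal is explicit. It is $p(z)=(1-z^2)/(1-2t_0z+z^2)$, that is, $f(z)=z/(1-2t_0z+z^2)^{1-\alpha}$ with $2t_0=1/\sqrt{1-(1-\alpha)^2}$. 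This is exactly the disk version of the paper's second extremal mapping, just as your $z/(1-z)^{2\beta}$ is the disk version of its first.
\end{itemize}
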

Many authors have investigated various subclasses of biholomorphic mappings in Banach spaces and their associated coefficient problems. Significant contributions in this direction can be found in \cite{HKK2021,HKL2001,H2023,Kohr1998,Suffridge1973,XDL2026,XHX2025}.

\medskip

The sharp estimates obtained by Cho \emph{et al.} \cite{CKKLS2018} naturally raise the following question: 
\begin{ques} Do analogous sharp estimates hold for starlike mappings of order $\alpha$ in complex Banach spaces? 
\end{ques} 
The main purpose of this paper is to answer this question affirmatively. Specifically, we establish sharp estimates for the second-order Hankel determinant, the Fekete--Szeg\"o functional and the Zalcman functional associated with the class of starlike mappings of order $\alpha$ on the unit ball of a complex Banach space. Our results extend the corresponding classical one-variable results to the infinite-dimensional setting.

The remainder of this paper is organized as follows. Section~2 is devoted to preliminary results, where we collect several useful known lemmas and establish some new auxiliary lemmas. In Section~3, we investigate the second-order Hankel determinant and the Fekete--Szeg\"o functional, obtaining sharp estimates and identifying the corresponding extremal mappings. In Section~4, we study the Zalcman functional and derive sharp bounds together with the corresponding extremal results.
\section{{\bf Lemmas}}
Before proving the sharpness of the main theorem, we establish the following auxiliary lemmas. These results will be used repeatedly in the subsequent analysis and form the basis for the proof of the sharpness of the obtained estimate.
\begin{lem}\label{L1}\cite[Lemma 2.1]{XHX2025}
Let $f\in S$, $0<\alpha<1$, $x_0\in X$ with $\|x_0\|=1$, and let
$T_{x_0}\in T(x_0)$. Define by $\dfrac{f\!\left(T_{x_0}(x)\right)}{T_{x_0}(x)}\,x$ belongs to $\mathcal{S}_{\mathbb{B}}^{*}(\alpha)$ if and only if $f\in \mathcal{S}^{*}(\alpha)$.
In particular, $F\in \mathcal{S}_{\mathbb{B}}^{*}$ if and only if 
$f\in \mathcal{S}^{*}.$
\end{lem}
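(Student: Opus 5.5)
Writing $F(x)=\frac{f(T_{x_0}(x))}{T_{x_0}(x)}\,x$, set $g(\zeta)=f(\zeta)/\zeta$, extended by $g(0)=1$. Since $f\in\mathcal S$, $g$ is holomorphic and zero-free on $\mathbb U$, so $F(x)=g(T_{x_0}(x))\,x$ is holomorphic on $\mathbb B$ with $F(0)=0$ and $DF(0)=I$. The plan is to compute $(DF(x))^{-1}F(x)$ explicitly and then evaluate $T_x$ of it.

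\textbf{Step 1: the derivative and its inverse.} Differentiating gives
\[
DF(x)h=g(\zeta)h+g'(\zeta)\,T_{x_0}(h)\,x,\qquad \zeta=T_{x_0}(x).
\]
This is a rank-one perturbation of a multiple of the identity. To solve $DF(x)h=F(x)=g(\zeta)x$, I try $h=\lambda x$. Then
\[
\lambda\bigl(g(\zeta)+\zeta g'(\zeta)\bigr)=g(\zeta),\qquad\text{so}\qquad \lambda=\frac{g(\zeta)}{g(\zeta)+\zeta g'(\zeta)}=\frac{f(\zeta)}{\zeta f'(\zeta)}.
\]
Local biholomorphy of $F$ amounts to invertibility of $DF(x)$. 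Because the operator is $g(\zeta)I$ plus a rank-one term, it is invertible exactly when $g(\zeta)\neq0$ and $g(\zeta)+\zeta g'(\zeta)=f'(\zeta)\ne0$, and both hold for univalent $f$. Consequently
\[
(DF(x))^{-1}F(x)=\frac{f(\zeta)}{\zeta f'(\zeta)}\,x,
\]
with the value $x$ when $\zeta=0$.

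\textbf{Step 2: the starlikeness quantity.} Applying $T_x$ and dividing by $\|x\|$ gives
\[
\frac{1}{\|x\|}T_x\bigl((DF(x))^{-1}F(x)\bigr)=\frac{f(\zeta)}{\zeta f'(\zeta)}.
\]
So $F\in\mathcal S^*_{\mathbb B}(\alpha)$ exactly when $\operatorname{Re}\frac{f(\zeta)}{\zeta f'(\zeta)}>\alpha$ for every $\zeta$ of the form $T_{x_0}(x)$ with $x\in\mathbb B\setminus\{0\}$. The set of such $\zeta$ is all of $\mathbb U$: we have $|T_{x_0}(x)|\le\|x\|<1$, and $x=\zeta x_0$ attains every $\zeta\in\mathbb U$.

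\textbf{Step 3: matching the conditions (main obstacle).} The paper's definition in \eqref{d1} applies the condition to $\frac{1}{\|x\|}T_x\bigl((DF)^{-1}F\bigr)$, which reduces to $f/(zf')$. The classical condition is instead $\operatorname{Re}(zf'/f)>\alpha$. These are not equivalent for $\alpha>0$, so the proof hinges on resolving this. The disc form of the definition, $|w-\frac1{2\alpha}|<\frac1{2\alpha}$, is equivalent to $\operatorname{Re}(1/w)>\alpha$, where $w=\frac{1}{\|x\|}T_x\bigl((DF(x))^{-1}F(x)\bigr)$. With $w=f/(\zeta f')$ we get $1/w=\zeta f'/f$, so the disc form is exactly $\operatorname{Re}(\zeta f'(\zeta)/f(\zeta))>\alpha$ on $\mathbb U$, i.e.\ $f\in\mathcal S^*(\alpha)$. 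I would therefore use the disc form of the definition from \cite{HKL2001}, which is the intended one, and read \eqref{d1} as its equivalent reformulation in terms of $1/w$.

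\textbf{Step 4: the case $\alpha=0$.} Here the condition becomes $\operatorname{Re} w>0$, which is equivalent to $\operatorname{Re}(1/w)>0$, so \eqref{p1} gives $F\in\mathcal S^*_{\mathbb B}$ if and only if $f\in\mathcal S^*$. Local biholomorphy is needed to invoke \eqref{p1}, and Step 1 supplies it.
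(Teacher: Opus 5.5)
Your proof is correct and complete. The paper gives no argument for this lemma; it only quotes it from \cite{XHX2025}. So your proposal is a self-contained replacement rather than a variant of an in-paper proof. It is the standard computation for mappings of the form $g(T_{x_0}(x))\,x$: the rank-one structure of $DF(x)$ gives $(DF(x))^{-1}F(x)=\frac{f(\zeta)}{\zeta f'(\zeta)}\,x$ with $\zeta=T_{x_0}(x)$. Since $x=\zeta x_0$ realizes every $\zeta\in\mathbb U\setminus\{0\}$, everything reduces to a one-variable condition on $\mathbb U$.

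Your Step 3 is a genuine correction, not a technicality: \eqref{d1} as printed is not equivalent to the disc condition of \cite{HKL2001}. Read literally, it would make $F\in\mathcal S^*_{\mathbb B}(\alpha)$ equivalent to $\operatorname{Re}\bigl(f(\zeta)/(\zeta f'(\zeta))\bigr)>\alpha$. For $\alpha>0$ this is a proper subclass of $\mathcal S^*(\alpha)$, so the lemma would be false. For example, $f(\zeta)=\zeta/(1-\zeta)^{2(1-\alpha)}$, the paper's own extremal function in Section 4, has $f/(\zeta f')=(1-\zeta)/(1+(1-2\alpha)\zeta)\to 0$ as $\zeta\to 1^-$. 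The correct equivalent form is $\operatorname{Re}\frac{\|x\|}{T_x((DF(x))^{-1}F(x))}>\alpha$. This is what the paper actually uses in the proof of Lemma \ref{L5}, where $\phi(\xi)=\xi/T_{x_0}(\varphi(\xi x_0))$ is required to have real part $>\alpha$. It is also the only reading consistent with the paper's one-dimensional Remark. So your interpretation is the right one, and with it your argument establishes the lemma.
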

\begin{lem}\label{L2}\cite[Lemma 2.2]{XHX2025}
Suppose that $g\in H(\mathbb{B},\mathbb{C})$, $g(0)=1$, and $F(x)=g(x)x,\; x\in \mathbb{B},$
where $0<\alpha<1$. Fix $x_0\in\partial \mathbb{B}$ and define $f(\xi)=g(\xi x_0)\xi,\; \xi\in\mathbb{U}.$
Then
\[
f\in \mathcal{S}^{*}(\alpha)
\quad\Longleftrightarrow\quad
F\in \mathcal{S}_\mathbb{B}^{*}(\alpha),
\]
and
\[
f\in \mathcal{S}^{*}
\quad\Longleftrightarrow\quad
F\in \mathcal{S}_\mathbb{B}^{*}.
\]
\end{lem}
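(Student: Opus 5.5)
The plan is to reduce everything to one explicit formula for the quantity
\[
w(x)=\frac{1}{\|x\|}T_x\!\left((DF(x))^{-1}F(x)\right)
\]
in terms of $f$, and then compare the two defining conditions pointwise. Two remarks first. The condition defining $\mathcal{S}_\mathbb{B}^{*}(\alpha)$ is $|w-\tfrac{1}{2\alpha}|<\tfrac{1}{2\alpha}$. For $w\neq 0$ this disc condition is equivalent to $\operatorname{Re}(1/w)>\alpha$, since inversion maps that open disc onto the half-plane $\operatorname{Re}\zeta>\alpha$. Also, the right-hand side $F\in\mathcal{S}_\mathbb{B}^{*}(\alpha)$ involves every $x\in\mathbb{B}\setminus\{0\}$, so I will read the statement as holding for the functions $f_{x_0}(\xi)=g(\xi x_0)\xi$ for every $x_0\in\partial\mathbb{B}$. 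With $x_0$ fixed only along one line, the implication from $f$ to $F$ cannot follow for arbitrary $g$.

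\textbf{Step 1: invert $DF(x)$.} Differentiating $F(x)=g(x)x$ gives
\[
DF(x)y=g(x)y+\bigl(Dg(x)y\bigr)x .
\]
This is a rank-one perturbation of $g(x)I$. Hence $DF(x)$ is invertible with bounded inverse exactly when $g(x)\neq0$ and $g(x)+Dg(x)x\neq0$. In that case the Sherman--Morrison formula gives
\[
(DF(x))^{-1}z=\frac{1}{g(x)}\Bigl(z-\frac{Dg(x)z}{g(x)+Dg(x)x}\,x\Bigr).
\]
Applying this to $z=F(x)=g(x)x$, I get
\[
(DF(x))^{-1}F(x)=\frac{g(x)}{g(x)+Dg(x)x}\,x .
\]
Since $T_x(x)=\|x\|$, this yields $w(x)=g(x)/(g(x)+Dg(x)x)$, and the result does not depend on the choice of $T_x\in T(x)$.

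\textbf{Step 2: compare with $f$.} Take $x\neq0$ and write $x=r u$ with $r=\|x\|\in(0,1)$ and $u=x/\|x\|\in\partial\mathbb{B}$. Then
\[
f_u(r)=g(x)r,\qquad f_u'(r)=g(x)+r\,Dg(x)u=g(x)+Dg(x)x .
\]
Hence $r f_u'(r)/f_u(r)=1/w(x)$, and likewise the local-biholomorphy conditions $g(x)\neq0$, $g(x)+Dg(x)x\neq0$ become $f_u(r)\neq0$, $f_u'(r)\neq0$. Now take a general point $\xi=re^{i\theta}$ for a direction $x_0$. Put $u=e^{i\theta}x_0$; then $f_{x_0}(\xi)=e^{i\theta}f_u(r)$ and $f_{x_0}'(\xi)=f_u'(r)$, so $\xi f_{x_0}'(\xi)/f_{x_0}(\xi)=r f_u'(r)/f_u(r)$. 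So the values of $zf'/f$ over all directions and all $\xi\in\mathbb{U}\setminus\{0\}$ are exactly the values of $1/w(x)$ over $x\in\mathbb{B}\setminus\{0\}$.

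\textbf{Step 3: conclude.} If every $f_{x_0}\in\mathcal{S}^*(\alpha)$, then $f_{x_0}$ and $f_{x_0}'$ do not vanish off $0$. Together with $DF(0)=I$ (from $g(0)=1$), Step 2 shows that $F$ is a normalized locally biholomorphic mapping. It also gives $\operatorname{Re}(1/w)>\alpha$ everywhere, that is, $F\in\mathcal{S}_\mathbb{B}^{*}(\alpha)$. Conversely, if $F\in\mathcal{S}_\mathbb{B}^{*}(\alpha)$, the same identities give $\operatorname{Re}\,\xi f'(\xi)/f(\xi)>\alpha$ for $\xi\neq0$. At $\xi=0$ we have $f(\xi)/\xi=g(\xi x_0)\to1$, so $f$ is normalized and therefore starlike of order $\alpha$. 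The statement for $\mathcal{S}^*$ is the case $\alpha=0$, using \eqref{p1}.

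The main obstacle is Step 1. One has to justify the inverse formula with bounded inverse in a general Banach space, and check that invertibility of $DF(x)$ is equivalent to the two scalar nonvanishing conditions. The identity $g(x)+Dg(x)x=f'$ is what makes this equivalence usable.
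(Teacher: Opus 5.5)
Your proof is correct. The paper gives no argument for this lemma; it is quoted from \cite[Lemma 2.2]{XHX2025}. What you wrote is the standard proof for results of this type. It rests on $(DF(x))^{-1}F(x)=\frac{g(x)}{g(x)+Dg(x)x}\,x$ together with $\frac{\xi f'(\xi)}{f(\xi)}=\frac{g(x)+Dg(x)x}{g(x)}$ for $x=\xi x_0$, which gives $\xi f'(\xi)/f(\xi)=1/w(\xi x_0)$ directly.

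Two of your choices are essential, not cosmetic, and should stay explicit.

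First, the implication $f\Rightarrow F$ really does need $f_{x_0}\in\mathcal{S}^*(\alpha)$ for every $x_0\in\partial\mathbb{B}$. Take $X=\mathbb{C}^2$ with the Euclidean norm, $x_0=e_1$ and $g(z)=1+2z_2$. Then $f_{x_0}(\xi)=\xi$, yet $g$ vanishes at $(0,-\tfrac12)\in\mathbb{B}$, where $DF$ has rank at most one. The paper only ever uses the direction $F\Rightarrow f$ (in Lemma~\ref{L4}), and that direction does hold for each fixed $x_0$.

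Second, you rightly use the disc form of the definition, equivalently $\operatorname{Re}(1/w)>\alpha$. The ``equivalent'' condition \eqref{d1} printed in the paper, $\operatorname{Re} w>\alpha$, is not equivalent to it. Under \eqref{d1} the lemma would characterize $\operatorname{Re}\bigl(f(\xi)/(\xi f'(\xi))\bigr)>\alpha$ rather than $\mathcal{S}^*(\alpha)$.

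There is one small imprecision in Step 1. When $X=\mathbb{C}$, invertibility of $DF(x)$ does not force $g(x)\neq0$, so your ``exactly when'' holds only for $\dim X\ge2$. This costs nothing. The identity $DF(x)x=(g(x)+Dg(x)x)\,x$ yields $g(x)+Dg(x)x\neq0$ and the formula for $(DF(x))^{-1}F(x)$ from invertibility alone. The order-$\alpha$ condition then forces $w(x)\neq0$, that is, $g(x)\neq0$. So Sherman--Morrison is only needed in the direction $f\Rightarrow F$, to establish local biholomorphy.
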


\begin{lem}\label{L3}\cite[Theorem 2.1]{CKKLS2018}
Suppose that $f\in \mathcal{S}^{*}(\alpha)$ and is given by \eqref{eq:A}. Then
\begin{equation}\label{eq:coefficients}
\begin{aligned}
a_2 &=(1-\alpha)c_1,\\[2mm]
a_3 &= \frac{1}{2}(1-\alpha)\left(c_2+(1-\alpha)c_1^2\right),\\[2mm]
a_4 &= \frac{1}{6}(1-\alpha)\left(2c_3+3(1-\alpha)c_1c_2+(1-\alpha)^2c_1^3\right),
\end{aligned}
\end{equation}
where $c_1$, $c_2$, and $c_3$ are given by \eqref{eq:P}.
\end{lem}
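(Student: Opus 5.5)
Lemma~\ref{L3} is a purely one-variable statement, so I will stay in the disk. Since $f\in\mathcal{S}^*(\alpha)$, the analytic characterization recalled in the Remark following the definition of $\mathcal{S}_\mathbb{B}^{*}(\alpha)$ gives $\operatorname{Re}\bigl(zf'(z)/f(z)\bigr)>\alpha$ on $\mathbb{U}$. I will therefore define
\[
p(z)=\frac{1}{1-\alpha}\left(\frac{zf'(z)}{f(z)}-\alpha\right),\qquad z\in\mathbb{U}.
\]
Because $f(z)/z$ is analytic and nonvanishing on $\mathbb{U}$ (starlike functions are univalent and vanish only at the origin) and equals $1$ at $0$, $p$ is analytic with $p(0)=1$ and $\operatorname{Re}p>0$. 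Hence $p\in\mathcal{P}$, with expansion \eqref{eq:P}. This rewrites the order-$\alpha$ condition as the identity $zf'(z)=f(z)\bigl(\alpha+(1-\alpha)p(z)\bigr)$.

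Next I will compare coefficients in this identity. Substituting \eqref{eq:A} gives $zf'(z)=z+\sum_{n\ge2}na_nz^n$ on the left. On the right the series is $\bigl(z+a_2z^2+a_3z^3+a_4z^4+\cdots\bigr)\bigl(1+(1-\alpha)(c_1z+c_2z^2+c_3z^3+\cdots)\bigr)$. Equating the coefficients of $z^2,z^3,z^4$ yields the recursive system
\begin{align*}
a_2&=(1-\alpha)c_1,\\
2a_3&=(1-\alpha)(c_2+a_2c_1),\\
3a_4&=(1-\alpha)(c_3+a_2c_2+a_3c_1),
\end{align*}
since $na_n-a_n=(n-1)a_n$.

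Finally I will solve this system successively. The first line is already the claimed formula for $a_2$. Putting it into the second gives $a_3=\tfrac12(1-\alpha)\bigl(c_2+(1-\alpha)c_1^2\bigr)$. Putting both into the third gives
\[
a_4=\tfrac13(1-\alpha)\Bigl(c_3+(1-\alpha)c_1c_2+\tfrac12(1-\alpha)c_1c_2+\tfrac12(1-\alpha)^2c_1^3\Bigr),
\]
which simplifies to $\tfrac16(1-\alpha)\bigl(2c_3+3(1-\alpha)c_1c_2+(1-\alpha)^2c_1^3\bigr)$.

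There is no real obstacle here. The only point needing care is the normalization of $p$: one must divide by $1-\alpha$ so that $p(0)=1$ and $p$ genuinely lies in $\mathcal{P}$. After that, the $z^4$ bookkeeping is routine.
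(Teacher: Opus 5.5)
Your proof is correct and is the standard derivation. The paper gives no proof of its own here, only a citation to Cho et al., where the formulas come from exactly this comparison of coefficients in $zf'(z)=f(z)\bigl(\alpha+(1-\alpha)p(z)\bigr)$. The paper's own analogous Banach-space computation (Lemma~\ref{L5}) uses the same normalization $p=(\phi-\alpha)/(1-\alpha)$ but expands the reciprocal $f/(zf')$. Your direct multiplication avoids that series inversion and is the cleaner route.
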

\begin{lem}\label{L4}  Let $g\in H(\mathbb{B},\mathbb{C})$ satisfy $g(0)=1$, and define $F(x)=g(x)x, x\in\mathbb{B}.$ Suppose that $F\in\mathcal{S}_{\mathbb{B}}^{*}(\alpha)$. Then, for every
$x_{0}\in X$ with $\|x_{0}\|=1$,
\bea\label{A234}
\begin{cases} A_{2}
&=
\frac{1}{2!}
T_{x_{0}}
\left(
D^{2}F(0)(x_{0}^{2})
\right)
=(1-\alpha)c_1,\\
A_{3}
&=
\frac{1}{3!}
T_{x_{0}}
\left(
D^{3}F(0)(x_{0}^{3})
\right)
=\frac{1}{2}(1-\alpha)\left(c_2+(1-\alpha)c_1^2\right),
\\
\text{and}\;\; 
A_{4}
&=
\frac{1}{4!}T_{x_{0}}\left(D^{4}F(0)(x_{0}^{4})\right)=\frac{1}{6}(1-\alpha)\left(2c_3+3(1-\alpha)c_1c_2+(1-\alpha)^2c_1^3\right),
\end{cases}
\eea
where $c_1$, $c_2$ and $c_3$ are given by (\ref{eq:P}).

\end{lem}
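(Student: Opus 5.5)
The idea is to reduce Lemma~\ref{L4} to the one-variable Lemma~\ref{L3} by restricting $F$ to the complex line through $x_0$. Fix $x_0$ with $\|x_0\|=1$ and $T_{x_0}\in T(x_0)$, and set
\[
f(\xi)=g(\xi x_0)\,\xi,\qquad \xi\in\mathbb{U}.
\]
Since $g(0)=1$ and $F\in\mathcal{S}_{\mathbb{B}}^{*}(\alpha)$, Lemma~\ref{L2} gives $f\in\mathcal{S}^{*}(\alpha)$. Write $f(\xi)=\xi+\sum_{n\ge2}a_n\xi^n$. Lemma~\ref{L3} then supplies $p\in\mathcal{P}$, namely the function with $\xi f'(\xi)/f(\xi)=\alpha+(1-\alpha)p(\xi)$, whose coefficients $c_1,c_2,c_3$ satisfy \eqref{eq:coefficients}. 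It therefore suffices to prove that $A_n=a_n$ for $n=2,3,4$.

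To identify the coefficients, expand $g$ in homogeneous polynomials near $0$:
\[
g(x)=1+\sum_{k\ge1}\frac{1}{k!}D^kg(0)(x^k).
\]
Multiplying by $x$ gives the homogeneous expansion of $F$. Its degree-$n$ term is
\[
\frac{1}{n!}D^nF(0)(x^n)=\frac{1}{(n-1)!}D^{n-1}g(0)(x^{n-1})\,x.
\]
This follows from uniqueness of homogeneous expansions. Alternatively, differentiate $F(x)=g(x)x$ by the Leibniz rule and evaluate at the diagonal.

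Substituting $x=x_0$ and applying $T_{x_0}$, which satisfies $T_{x_0}(x_0)=\|x_0\|=1$, yields
\[
A_n=\frac{1}{(n-1)!}D^{n-1}g(0)(x_0^{n-1}).
\]
On the other side, $g(\xi x_0)=1+\sum_k \frac{\xi^k}{k!}D^kg(0)(x_0^k)$ by homogeneity. Hence
\[
a_n=\frac{1}{(n-1)!}D^{n-1}g(0)(x_0^{n-1})=A_n,
\]
and \eqref{A234} follows by substitution into \eqref{eq:coefficients}.

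The only step needing care is the coefficient identification. One must make sure that the symmetric multilinear form $D^nF(0)$ evaluated on the diagonal $(x_0,\dots,x_0)$ matches the polynomial coefficient. Using homogeneous polynomial expansions rather than full multilinear Leibniz formulas avoids the symmetrization bookkeeping. The rest is a direct application of Lemmas~\ref{L2} and \ref{L3}.
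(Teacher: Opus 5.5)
Your proposal is correct and follows the paper's proof: you restrict to $f(\xi)=g(\xi x_0)\xi$, invoke Lemma~\ref{L2} to get $f\in\mathcal{S}^{*}(\alpha)$, identify $a_n=A_n$, and substitute into Lemma~\ref{L3}. The only difference is cosmetic: the paper identifies the coefficients by writing $f(\xi)=T_{x_0}(F(\xi x_0))$ and differentiating in $\xi$, whereas you read them off from the homogeneous expansion of $g$, and both are valid.
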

\begin{proof}
Fix $x_{0}\in\partial\mathbb{B}$ and define $f(\xi)=g(\xi x_{0})\,\xi, \xi\in\mathbb{U}.$
Since $F\in\mathcal{S}_{\mathbb{B}}^{*}(\alpha)$, Lemma~\ref{L2}
implies that $f\in \mathcal{S}^*(\alpha).$

On the other hand,
\[
f(\xi)
=
T_{x_{0}}\bigl(F(\xi x_{0})\bigr),
\]
and hence
\[
a_{2}
=
\frac{f''(0)}{2!}
=
\frac{1}{2!}
T_{x_{0}}
\left(
D^{2}F(0)(x_{0}^{2})
\right)
=
A_{2},
\]
\[
a_{3}
=
\frac{f^{(3)}(0)}{3!}
=
\frac{1}{3!}
T_{x_{0}}
\left(
D^{3}F(0)(x_{0}^{3})
\right)
=
A_{3},
\]
and \[
a_{4}
=
\frac{f^{(4)}(0)}{4!}
=
\frac{1}{4!}
T_{x_{0}}
\left(
D^{4}F(0)(x_{0}^{4})
\right)
=
A_{4}.
\]
Applying Lemma~\ref{L3}, we obtain the desired result.
\end{proof}

\begin{lem}\label{L5} Let $F$ be a locally biholomorphic mapping on $\mathbb{B}$ and $F$ satisfy the following assumptions:
\begin{equation}\label{cc1}
\frac{D^{k+1}F(0)\left(x^{k+1}\right)}{(k+1)!}
= H_{F,k}(x)x,\qquad x\in X,\quad k=1,2,3.
\end{equation}
where $H_{F,k}(x)$ is a homogeneous polynomial of degree $k$ with values in $\mathbb{C}$. If $F\in \mathcal{S}_{\mathbb{B}}^{*}(\alpha)$, then for each $x_0\in X$ with $\|x_0\|=1$,  the quantities $A_2$, $A_3$ and $A_4$ are given by  (\ref{A234}).
\end{lem}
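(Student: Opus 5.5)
The approach is to reduce to Lemma~\ref{L4}. The hypothesis \eqref{cc1} says that the Taylor polynomial of $F$ at $0$ up to order $4$ has the ``radial'' form $g(x)x$. We therefore build an auxiliary mapping of exactly that form which agrees with $F$ to fourth order along the complex line through $x_0$, and which still lies in $\mathcal{S}_{\mathbb{B}}^{*}(\alpha)$. We then read off $A_2,A_3,A_4$.

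\textbf{Step 1: the one-variable function.} Fix $x_0$ with $\|x_0\|=1$ and $T_{x_0}\in T(x_0)$. By \eqref{cc1},
\[
F(\xi x_0)=\xi x_0+\sum_{k=1}^{3}H_{F,k}(x_0)\xi^{k+1}x_0+O(\xi^5),\qquad \xi\in\mathbb{U}.
\]
Set $p(\xi)=\frac{1}{\|\xi x_0\|}T_{\xi x_0}\bigl((DF(\xi x_0))^{-1}F(\xi x_0)\bigr)$ for $\xi\neq 0$. Using $T_{\xi x_0}=\frac{|\xi|}{\xi}T_{x_0}$, this becomes
\[
p(\xi)=\frac{1}{\xi}T_{x_0}\bigl((DF(\xi x_0))^{-1}F(\xi x_0)\bigr).
\]
This is holomorphic in $\xi\neq0$ and bounded near $0$, so it extends holomorphically to $\mathbb{U}$ with $p(0)=1$. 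By \eqref{d1}, $\operatorname{Re}p>\alpha$. Hence $q=(p-\alpha)/(1-\alpha)\in\mathcal{P}$; call its coefficients $c_1,c_2,c_3$.

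\textbf{Step 2: expand $(DF(x))^{-1}F(x)$ to fourth order.} Use \eqref{cc1} with the polarized forms. Write $P_{k+1}(x)=H_{F,k}(x)x$. Then
\[
DP_{k+1}(x)y=DH_{F,k}(x)(y)\,x+H_{F,k}(x)y,
\]
and in particular $DP_{k+1}(x)x=(k+1)H_{F,k}(x)x$. Hence the vector $x$ is an eigenvector of every term of $DF(x)=I+\sum_k DP_{k+1}(x)$ applied to $x$-multiples. We then look for $(DF(x))^{-1}F(x)=\phi(x)x+O(\|x\|^5)$. Solving $DF(x)(\phi x)=F(x)$ order by order gives $\phi=1+\phi_1+\phi_2+\phi_3$, with the $\phi_j$ scalar homogeneous polynomials depending only on $H_{F,1},H_{F,2},H_{F,3}$. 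They are exactly the formal coefficients of $f(\xi)/(\xi f'(\xi))$ for the scalar function
\[
f(\xi)=\xi+\sum_{k=1}^3 H_{F,k}(x_0)\xi^{k+1}.
\]
Applying $T_{x_0}$ with $x=\xi x_0$ gives $p(\xi)=\xi f'(\xi)/f(\xi)$ up to $O(\xi^4)$, inverted appropriately. More precisely, $1/p$ matches $f/(\xi f')$ to order $3$.

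\textbf{Step 3: conclude.} Since $A_{k+1}=T_{x_0}\bigl(H_{F,k}(x_0)x_0\bigr)=H_{F,k}(x_0)$, the relation $\xi f'/f=1+(1-\alpha)(c_1\xi+c_2\xi^2+c_3\xi^3)+O(\xi^4)$ holds with $a_{k+1}=A_{k+1}$. Comparing coefficients, exactly as in the derivation of Lemma~\ref{L3}, yields \eqref{A234}. That lemma's computation only uses this identity of power series up to order $3$ together with $q\in\mathcal{P}$.

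\textbf{Main obstacle.} The delicate step is Step~2, the inversion of $DF(x)$ to fourth order. One has to check that the correction terms $DH_{F,k}(x)(y)x$ never produce components outside $\mathbb{C}x$ at orders $\le 4$ when the operator acts on multiples of $x$. They do not, because each term applied to $\phi(x)x$ returns a multiple of $x$ plus terms of order $\ge5$. I would verify this by writing $(DF(x))^{-1}=I-L+L^2-L^3+\cdots$ with $L=\sum_k DP_{k+1}(x)$ and tracking degrees.
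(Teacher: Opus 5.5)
\textbf{There is a genuine gap where Steps 1 and 3 meet: the positivity condition is applied to the wrong function.}

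Your own Step 2 gives $(DF(x))^{-1}F(x)=\frac{g_0(x)}{g_1(x)}x+O(\|x\|^5)$, where $g_0=1+\sum_k H_{F,k}$ and $g_1=1+\sum_k (k+1)H_{F,k}$. Since $g_0(\xi x_0)=f(\xi)/\xi$ and $g_1(\xi x_0)=f'(\xi)$, this yields $p(\xi)=f(\xi)/(\xi f'(\xi))+O(\xi^4)$. So it is $p$, not $1/p$, that matches $f/(\xi f')$, and your sentence ``$1/p$ matches $f/(\xi f')$'' is backwards.

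Consequently, the $c_j$ you define in Step 1 (the coefficients of $(p-\alpha)/(1-\alpha)$) satisfy $f/(\xi f')=1+(1-\alpha)(c_1\xi+c_2\xi^2+c_3\xi^3)+O(\xi^4)$. Step 3, however, uses the same $c_j$ in the expansion of $\xi f'/f$. This is not a harmless relabelling. Your normalization actually gives $A_2=-(1-\alpha)c_1$ and $A_3=\tfrac12(1-\alpha)\bigl(2(1-\alpha)c_1^2-c_2\bigr)$. For $c_1=c_2=2$ this is $(A_2,A_3)=\bigl(-2(1-\alpha),(1-\alpha)(3-4\alpha)\bigr)$. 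For $\alpha>0$ that pair is not of the form \eqref{A234} for any $p\in\mathcal{P}$: matching $A_2$ would force $c_1=-2$, hence $c_2=2$ and $A_3=(1-\alpha)(3-2\alpha)$.

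The root cause is relying on \eqref{d1}. The defining condition is $\bigl|w-\frac{1}{2\alpha}\bigr|<\frac{1}{2\alpha}$ with $w=\frac{1}{\|x\|}T_x\bigl((DF(x))^{-1}F(x)\bigr)$. Expanding, it says $|w|^2<\operatorname{Re}w/\alpha$, i.e.\ $w\neq 0$ and $\operatorname{Re}(1/w)>\alpha$. It is not equivalent to $\operatorname{Re}w>\alpha$; in one variable $w=f/(zf')$. So the printed reformulation \eqref{d1} cannot be used here.

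\textbf{The repair} is to work with $\phi=1/p$, i.e.\ $\phi(\xi)=\xi/T_{x_0}\bigl((DF(\xi x_0))^{-1}F(\xi x_0)\bigr)$, which is exactly the auxiliary function in the paper's proof. The disc condition gives $p\neq 0$ on $\mathbb{U}\setminus\{0\}$, so $\phi$ extends holomorphically with $\phi(0)=1$ and $\operatorname{Re}\phi>\alpha$. Moreover $\phi=\xi f'/f+O(\xi^4)$, so $(\phi-\alpha)/(1-\alpha)\in\mathcal{P}$ and your Step 3 goes through verbatim.

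With that fix, your route is a legitimate alternative to the paper's. The paper differentiates $DF(x)\varphi(x)=F(x)$ at $0$ up to order four and feeds in the radial hypothesis term by term. Your Euler identity $DP_{k+1}(x)x=(k+1)P_{k+1}(x)$ instead shows in one stroke that $\frac{g_0}{g_1}x$ solves $DF(x)y=F(x)$ modulo $O(\|x\|^5)$. Boundedness of $(DF(x))^{-1}$ near $0$ then finishes the argument, so no Neumann series is needed, and everything collapses onto the one-variable identity behind Lemma~\ref{L3}.

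Finally, drop the opening plan of reducing to Lemma~\ref{L4} through an auxiliary radial mapping in $\mathcal{S}_{\mathbb{B}}^{*}(\alpha)$. It is never carried out, a truncation of $F$ need not lie in the class, and Steps 1--3 do not use it.
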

\begin{proof} Fix $x_0\in \partial B$ and let $T_{x_0}\in T(x_0)$. Define
\bea\label{aaa1}
\phi(\xi)=
\begin{cases}
\dfrac{\xi}{T_{x_0}\!\left(\varphi(\xi x_0)\right)}, & \xi\neq 0,\\[2mm]
1, & \xi=0,
\end{cases}
\eea
where
\[
\varphi(x)=(DF(x))^{-1}F(x).
\]
Then $\phi\in H(\mathbb U)$ and $\phi(0)=1$. Moreover, since
$F\in \mathcal{S}_{\mathbb{B}}^{*}(\alpha)$ , it follows from the definition of Ozaki close to convex function that
\[
\Re\!\left(\frac{\xi}{T_{x_0}\!\left(\varphi(\xi x_0)\right)}\right)
=
\Re\!\left(\frac{\|\xi x_0\|}{T_{\xi x_0}\!\left(\varphi(\xi x_0)\right)}\right)>\alpha,\;\;\text{for}\;\;0\leq \alpha < 1
\qquad \xi\in\mathbb U\setminus\{0\}.
\]
Hence,
\[\Re\!\left(\phi(\xi)\right)>\alpha,\;\;\text{for}\;\;0\leq \alpha < 1\qquad \xi\in\mathbb U.
\]
 Then there exists $p\in \mathcal{P}$ such that 
\bea\label{kp1}\frac{1}{1-\alpha}(\phi(\xi)-\alpha)=p(\xi).\eea
Since $\phi$ is holomorphic function on $\mathbb{U}$, then it has Taylor series expansion:
\bea\label{phi1}\phi(\xi)=1+\frac{T_{x_0}(D^2\varphi(0)(x_0^2))}{2!}\xi+\frac{T_{x_0}(D^3\varphi(0)(x_0^3))}{3!}\xi^2+\frac{T_{x_0}(D^4\varphi(0)(x_0^4))}{4!}\xi^3+\cdots,\xi\in \mathbb{U}.\eea
Using (\ref{aaa1}), \eqref{eq:P}, \eqref{kp1} and \eqref{phi1}, a direct computation yields
\bea\label{qq1}
\begin{cases} \frac{T_{x_0}(D^2\varphi(0)(x_0^2))}{2!}&=-(1-\alpha)c_1,\\
 \frac{T_{x_0}(D^3\varphi(0)(x_0^3))}{3!}&=(1-\alpha)^2 c_1^2-(1-\alpha)c_2\\
\frac{T_{x_0}(D^4\varphi(0)(x_0^4))}{4!}&=-(1-\alpha)^3 c_1^3+2(1-\alpha)^2 c_1c_2-(1-\alpha)c_3.
\end{cases}
\eea
On the other hand, since $\varphi(x)=(DF(x))^{-1}F(x)$, it follows that
\bea\label{qqq2}
\begin{cases}
\frac{D^{2}F(0)(x^{2})}{2!}&=-\frac{D^{2}\varphi(0)(x^{2})}{2!}\\
 -\frac{D^{3}F(0)(x^{3})}{3}&=\frac{D^{3}\varphi(0)(x^{3})}{3!}+D^{2}F(0)\!\left(x,\,\frac{D^{2}\varphi(0)(x^{2})}{2!}\right)\\
-\frac{D^{4}F(0)(x^{4})}{8}&=\frac{D^{4}\varphi(0)(x^{4})}{4!}+D^{2}F(0)\!\left(x,\,\frac{D^{3}\varphi(0)(x^{3})}{3!}\right)\\
&\;\;\;+\frac{1}{2}D^{3}F(0)\!\left(x^{2},\,\frac{D^{2}\varphi(0)(x^{2})}{2!}\right).
\end{cases}\eea

Combining(\ref{qq1}) and (\ref{qqq2}) together with assumption \eqref{cc1}, we derive explicit formulas for the coefficients \(A_2\), \(A_3\)  and \(A_4\) in terms of the parameters \(p_1\), \(p_2\) and  \(p_3\), respectively.

\bea\label{A2}A_2&=&T_{x_0}\!\left(\frac{D^2F(0)(x_0^2)}{2!}\right)=H_{F,1}(x_0)
=-\,T_{x_0}\!\left(\frac{D^2\varphi(0)(x_0^2)}{2!}\right)
=(1-\alpha)c_1\eea
and
\[
\frac{D^2\varphi(0)(x_0^2)}{2!}=-\,\frac{D^2F(0)(x_0^2)}{2!}=-H_{F,1}(x_0)x_0
=-A_2x_0
=-(1-\alpha)c_1x_0.
\]

\bea\label{A3} A_3&=&T_{x_0}\!\left(\frac{D^3F(0)(x_0^3)}{3!}\right)
=H_{F,2}(x_0) \nonumber\\
&=&-\frac{1}{2}\left(T_{x_0}\!\left(\frac{D^3\varphi(0)(x_0^3)}{3!}\right)
+T_{x_0}\!\left(D^2F(0)\!\left(x_0,\frac{D^2\varphi(0)(x_0^2)}{2!}\right)\right)\right) \nonumber\\
&=&-\frac{1}{2}\left((1-\alpha)^2 c_1^2-(1-\alpha)c_2+T_{x_0}\!\left(D^2F(0)(x_0,-(1-\alpha)c_1x_0\right)\right) \nonumber\\
&=&-\frac{1}{2}\left((1-\alpha)^2 c_1^2-(1-\alpha)c_2-2(1-\alpha)c_1A_2\right)\nonumber\\
&=&\frac{1-\alpha}{2}\left(c_2+(1-\alpha) c_1^2\right)
\eea

and

\beas 
\frac{D^3\varphi(0)(x_0^3)}{3!}&=&-\frac{D^3F(0)(x_0^3)}{3}-D^2F(0)\!\left(x_0,\frac{D^2\varphi(0)(x_0^2)}{2!}\right)\\
&=&-\frac{D^3F(0)(x_0^3)}{3}-D^2F(0)(x_0,(1-\alpha) c_1 x_0)\\
&=&\left(-2H_{F,2}(x_0)-2(1-\alpha)^2 b_1^2\right) x_0\\
&=&(1-\alpha)((1-\alpha)c_1^2-c_2)x_0.\eeas
\bea\label{A4}
A_4&=& T_{x_0}\!\left(\frac{D^4F(0)(x_0^4)}{4!}\right)=H_{F,3}(x_0) \nonumber\\
&=&-\frac{1}{3}\Bigg(T_{x_0}\!\left(\frac{D^4\varphi(0)(x_0^4)}{4!}\right)+T_{x_0}\!\left(D^2F(0)\!\left(x_0,\frac{D^3\varphi(0)(x_0^3)}{3!}\right)\right) \nonumber\\
&& +\frac{1}{2}T_{x_0}\!\left(D^3F(0)\!\left(x_0^2,\frac{D^2\varphi(0)(x_0^2)}{2!}\right)\right)\Bigg) \nonumber\\
&=&-\frac{1}{3}\Bigg(-(1-\alpha)^3 c_1^3+2(1-\alpha)^2 c_1c_2-(1-\alpha)c_3 \nonumber\\
&&+
T_{x_0}\!\left(D^2F(0)(x_0,(1-\alpha)((1-\alpha)c_1^2-c_2)x_0)\right)+\frac{1}{2}T_{x_0}\!\left(D^3F(0)(x_0^2,-(1-\alpha)c_1x_0)\right)\Bigg) \nonumber\\
&=&-\frac{1}{3}\Bigg(-(1-\alpha)^3 c_1^3+2(1-\alpha)^2 c_1c_2-(1-\alpha)c_3\nonumber\\
&&+2(1-\alpha)((1-\alpha)c_1^2-c_2)x_0A_2-3(1-\alpha)c_1A_3\Big) \nonumber\\
&=&\frac{1}{6}(1-\alpha)\left((1-\alpha)^2 c_1 +3(1-\alpha)c_1c_2+2c_3\right).\eea
\end{proof}

\section{\texorpdfstring{\bf The Second-Order Hankel Determinant for the Class $\mathcal{S}^*_\mathbb{B}(\alpha)$}{The Second-Order Hankel Determinant}}

In this section, we study the second-order Hankel determinant
\[
H_{2,2}(F)=\begin{vmatrix}
A_2 &A_3\\
 A_3 & A_4 
\end{vmatrix}=A_2A_4-A_3^2\;\text{and}\; H_{2,1}(F)=\begin{vmatrix}
1 &A_2\\
 A_2 & A_3 
\end{vmatrix}=A_3-A_2^2,
\] where $A_2$, $A_3$ and $A_4$ are defined by
\eqref{eq:An} for the class $\mathcal{S}^*_\mathbb{B}(\alpha)$. We derive sharp upper bounds for $H_{2,2}(F)$, $H_{2,1}(F)$  and characterize the extremal mappings for which equality are attained, thereby establishing the sharpness of the obtained results.
\begin{theo}\label{T1}
Let $g\in H(\mathbb{B},\mathbb{C})$ satisfy $g(0)=1$, and define $F(x)=g(x)x, x\in\mathbb{B}.$
Suppose that $F\in\mathcal{S}^*_\mathbb{B}(\alpha)$, $0\leq \alpha <1$. Then, for every
$x_{0}\in X$ with $\|x_{0}\|=1$,
\[
|H_{2,2}(F)|\leq (1-\alpha)^2,
\]
 Moreover, the estimate is sharp.
\end{theo}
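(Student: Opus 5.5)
The plan is to reduce the estimate to the one-variable Theorem A via the slice function $f(\xi)=g(\xi x_0)\xi$, and then to use Lemma~\ref{L1} for sharpness.

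Fix $x_0\in X$ with $\|x_0\|=1$ and $T_{x_0}\in T(x_0)$. Since $F(x)=g(x)x$, we have $T_{x_0}(F(\xi x_0))=g(\xi x_0)\xi=f(\xi)$ for $\xi\in\mathbb{U}$. As in the proof of Lemma~\ref{L4}, Lemma~\ref{L2} gives $f\in\mathcal{S}^*(\alpha)$. Differentiating $f(\xi)=T_{x_0}(F(\xi x_0))$ shows that the Taylor coefficients of $f$ satisfy $a_n=A_n$ for $n=2,3,4$. Hence
\[
H_{2,2}(F)=A_2A_4-A_3^2=a_2a_4-a_3^2=H_{2,2}(f).
\]
Theorem A then yields $|H_{2,2}(F)|\le(1-\alpha)^2$ at once.

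One could instead insert the formulas \eqref{A234} of Lemma~\ref{L4} and estimate
\[
A_2A_4-A_3^2=\frac{(1-\alpha)^2}{12}\Bigl(4c_1c_3-3c_2^2-(1-\alpha)^2c_1^4\Bigr),
\]
expressing $c_2,c_3$ through the Libera--Z\l otkiewicz representation in terms of $c_1\in[0,2]$ (after rotation) and parameters $\zeta,\eta$ in the closed disk. That route just reproduces the proof of Theorem A, so I would simply cite the theorem. The case $\alpha=0$ is also covered by Theorem A; I would only check that Lemma~\ref{L2} is meant to include $\alpha=0$ through its starlike statement.

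For sharpness I would use Lemma~\ref{L1} with the function $f(z)=z/(1-z^2)^{1-\alpha}\in\mathcal{S}^*(\alpha)$. This is the function corresponding to $p(z)=(1+z^2)/(1-z^2)$, for which $c_1=c_3=0$ and $c_2=2$. Its coefficients are $a_2=a_4=0$ and $a_3=1-\alpha$, so $|a_2a_4-a_3^2|=(1-\alpha)^2$. Set
\[
F(x)=\frac{f(T_{x_0}(x))}{T_{x_0}(x)}\,x,
\]
which is of the form $g(x)x$ with $g(0)=1$, and which lies in $\mathcal{S}^*_\mathbb{B}(\alpha)$ by Lemma~\ref{L1}. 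Since $T_{x_0}(x_0)=1$, we get $T_{x_0}(F(\xi x_0))=f(\xi)$, so $A_n=a_n$ and $|H_{2,2}(F)|=(1-\alpha)^2$.

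The main point requiring care is the identification $A_n=a_n$ together with the use of Lemma~\ref{L2}: the multilinear derivatives restricted to the line $\xi x_0$ must match the one-variable derivatives of $f$. This holds because $\xi\mapsto F(\xi x_0)$ has Taylor coefficients $D^nF(0)(x_0^n)/n!$ and $T_{x_0}$ is linear and bounded, so it commutes with the power series.
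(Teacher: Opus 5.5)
Your proposal is correct and follows the paper's proof essentially verbatim. The paper's Lemma~\ref{L4} packages exactly your slice reduction (Lemma~\ref{L2} together with $a_n=A_n$, obtained from $f(\xi)=T_{x_0}(F(\xi x_0))$), and then cites Theorem~A. Its sharpness example $F(x)=x/(1-(T_{x_0}(x))^2)^{1-\alpha}$ is precisely the mapping you build from $f(z)=z/(1-z^2)^{1-\alpha}$ via Lemma~\ref{L1}.
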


\begin{proof}
 Fix $x_0\in \partial{\mathbb{B}}$ and let $T_{x_0}\in T(x_0)$. Then applying Lemma \ref{L4} we deduce that

\[
|H_{2,2}(F)|
=
|H_{2,2}(f)|.
\]
Applying Theorem~A (see \cite[Theorem 2.3]{CKKLS2018}),
we obtain
\[
|H_{2,2}(F)| \leq (1-\alpha)^2.\]

This completes the proof.
\end{proof}

The following example demonstrates that the estimate obtained in Theorem \ref{T1} is sharp.

\begin{exm}
Consider the mapping
\bea\label{ex1}
F(x)=
\frac{1}{\left(1-(T_{x_0}(x))^2\right)^{1-\alpha}}\,x,
\qquad
x_0\in\partial\mathbb{B},\;
T_{x_0}\in T(x_0).
\eea
By Lemma \ref{L1}, we have $F\in\mathcal{S}^*_\mathbb{B}(\alpha).$
Furthermore, a straightforward computation gives
\[
\frac{D^2F(0)(x^2)}{2!}=0,\;\;\;
\frac{D^3F(0)(x^3)}{3!}
=
(1-\alpha)
\bigl(T_{x_0}(x)\bigr)^2x,
\;\;\;\text{and}\;\;\;
\frac{D^4F(0)(x^4)}{4!}=0.
\]

Hence,
\bea\label{Q1}
A_2=0,\;\;
A_3=1-\alpha,\;\;\text{and}\;\;
A_4=0.
\eea
Therefore,
\[
|H_{2,2}(F)|
=
\left|A_2A_4-A_3^2\right|
=
(1-\alpha)^2.
\]

Thus, the upper bound obtained in Theorem \ref{T1} is attained, showing that the estimate is sharp.
\end{exm}

\medskip
Next, removing the restrictive assumption $F(x)=g(x)x$, we generalize Theorem~A to higher dimensions under weaker assumptions than those of Theorem \ref{T1}. Assume that
\begin{equation}\label{cc1}
\frac{D^{k+1}F(0)\left(x^{k+1}\right)}{(k+1)!}
= H_{F,k}(x)x,\qquad x\in X,\quad k=1,2,3.
\end{equation}
where $H_{F,k}(x)$ is a homogeneous polynomial of degree $k$ with values in $\mathbb{C}$. The fact that the assumption \eqref{cc1} is weaker than that of Theorem \ref{T1}  is justified in \cite{H2023}.

\begin{theo}\label{T1.1}
Let $F$ be a locally biholomorphic mapping on $\mathbb{B}$, and suppose that $F$ satisfies the assumption \eqref{cc1}. If $F\in\mathcal{S}^*_\mathbb{B}(\alpha)$, $0\leq \alpha <1$, then for every $x_0\in X$ with $\|x_0\|=1$, we have
\[
|H_{2,2}(F)|=|A_2A_4-A_3^2|\leq (1-\alpha)^2,
\]
 where $A_1=1$, and $A_2$, $A_3$, and $A_4$ are defined by
\eqref{eq:An}, respectively. Moreover, the above estimate is sharp.
\end{theo}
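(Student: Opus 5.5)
The plan is to reduce the statement to the one-variable quantity $c_1,c_2,c_3$ through Lemma~\ref{L5}, and then reuse the coefficient inequality behind Theorem~A.

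Fix $x_0\in\partial\mathbb{B}$ and $T_{x_0}\in T(x_0)$. Since $F$ is locally biholomorphic, satisfies \eqref{cc1} and lies in $\mathcal{S}^*_\mathbb{B}(\alpha)$, Lemma~\ref{L5} applies. It gives a function $p\in\mathcal{P}$, built from $\phi(\xi)=\xi/T_{x_0}(\varphi(\xi x_0))$ through \eqref{kp1}, whose first coefficients $c_1,c_2,c_3$ satisfy
\[
A_2=(1-\alpha)c_1,\quad A_3=\tfrac12(1-\alpha)\bigl(c_2+(1-\alpha)c_1^2\bigr),\quad A_4=\tfrac16(1-\alpha)\bigl(2c_3+3(1-\alpha)c_1c_2+(1-\alpha)^2c_1^3\bigr).
\]
These are exactly the expressions in Lemma~\ref{L3}.

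Next I realize these numbers as genuine coefficients of a function in $\mathcal{S}^*(\alpha)$. Given $p$, I define $f$ on $\mathbb{U}$ by
\[
\frac{\xi f'(\xi)}{f(\xi)}=\alpha+(1-\alpha)p(\xi),
\qquad\text{that is,}\qquad
f(\xi)=\xi\exp\!\Bigl(\int_0^\xi\frac{(1-\alpha)(p(t)-1)}{t}\,dt\Bigr).
\]
Then $f\in\mathcal{S}^*(\alpha)$. By Lemma~\ref{L3}, its coefficients $a_2,a_3,a_4$ are given by the same formulas in $c_1,c_2,c_3$. Hence $A_k=a_k$ for $k=2,3,4$, so
\[
|H_{2,2}(F)|=|a_2a_4-a_3^2|\le(1-\alpha)^2
\]
by Theorem~A.

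The main obstacle is that Lemma~\ref{L5} only furnishes $p\in\mathcal{P}$, not a one-variable function attached to $F$. The realization step above supplies that function, since every $p\in\mathcal{P}$ yields an $f\in\mathcal{S}^*(\alpha)$. A more self-contained alternative avoids Theorem~A as a black box. One substitutes the formulas into $A_2A_4-A_3^2$ and bounds the resulting expression in $c_1,c_2,c_3$ directly. For this one can use the Libera--Złotkiewicz representations of $c_2,c_3$ in terms of $c_1$ and two parameters in the closed unit disk. By rotation invariance one may take $c_1=c\in[0,2]$, then maximize over the parameters and over $c$. This is the same computation carried out in \cite{CKKLS2018}.

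For sharpness I reuse the mapping \eqref{ex1}. It has the form $g(x)x$, so each $D^{k+1}F(0)(x^{k+1})/(k+1)!$ is a scalar homogeneous polynomial times $x$, and \eqref{cc1} holds. By Lemma~\ref{L1} it belongs to $\mathcal{S}^*_\mathbb{B}(\alpha)$. By \eqref{Q1}, $A_2=A_4=0$ and $A_3=1-\alpha$, so $|H_{2,2}(F)|=(1-\alpha)^2$ and the bound is attained.
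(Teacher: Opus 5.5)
Your proposal is correct and follows essentially the paper's route. Lemma~\ref{L5} expresses $A_2,A_3,A_4$ through the initial coefficients of the Carath\'eodory function $p$ in \eqref{kp1}, the bound comes from Theorem~A, and sharpness comes from the mapping \eqref{ex1}, which has the form $g(x)x$ and so satisfies \eqref{cc1}. Your realization step, defining $f$ by $\xi f'(\xi)/f(\xi)=\alpha+(1-\alpha)p(\xi)$ so that $a_k=A_k$, is a clean way to make the paper's phrase ``proceeding analogously to the proof of Theorem~A'' rigorous, since it lets you cite Theorem~A as a black box instead of redoing the computation.
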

\begin{proof} Fix $x_0\in \partial{\mathbb{B}}$ and let $T_{x_0}\in T(x_0)$. Next, applying Lemma \ref{L5} and proceeding analogously to the proofs of Theorems A  (see \cite[Theorem 2.3]{CKKLS2018}), we deduce that
\begin{align*}
|T_{2,2}(F)|\leq (1-\alpha)^2.
\end{align*}
 The sharpness of the result is an immediate consequence of Theorem \ref{T1}. This completes the proof.\end{proof}
 \begin{rem}
An immediate consequence of Theorems~\ref{T1} and~\ref{T1.1} is obtained by setting $\alpha=0$. In this case, we have the sharp estimate
$|H_{2,2}(F)|\leq 1$
for the class $\mathcal{S}_{\mathbb{B}}^{*}$ of starlike mappings on the unit ball of a complex Banach space.

Furthermore, if $\mathbb{B}=\mathbb{U}$ and $X=\mathbb{C}$, then $\mathcal{S}_{\mathbb{B}}^{*}$ coincides with the classical class $\mathcal{S}^{*}$ of starlike univalent functions on the unit disk. Consequently, the above estimate reduces to $|H_{2,2}(f)|\leq 1,$
which agrees with the sharp bound established by Janteng \emph{et al.}~\cite[Theorem~3.1]{JHD2007}.

Moreover, under the specialization $\mathbb{B}=\mathbb{U}$ and $X=\mathbb{C}$, Theorems~\ref{T1} and~\ref{T1.1} recover Theorem~A of Cho \emph{et al.}~\cite[Theorem~2.5]{CKKLS2018}. Thus, our results provide a genuine extension of the corresponding one-dimensional theorem to the setting of complex Banach spaces.
\end{rem} 
\begin{theo}\label{T2}
Let $g\in H(\mathbb{B},\mathbb{C})$ satisfy $g(0)=1$, and define $F(x)=g(x)x, x\in\mathbb{B}.$
Suppose that $F\in\mathcal{S}^*_\mathbb{B}(\alpha)$, $0\leq \alpha <1$. Then, for every
$x_{0}\in X$ with $\|x_{0}\|=1$,
\[
|H_{2,1}(F)|\leq 1-\alpha,
\]
 Moreover, the estimate is sharp.
\end{theo}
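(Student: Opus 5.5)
The plan is to reduce the estimate to the one-variable Theorem~B exactly as in the proof of Theorem~\ref{T1}. Fix $x_0\in\partial\mathbb{B}$ and $T_{x_0}\in T(x_0)$, and consider the slice function $f(\xi)=g(\xi x_0)\xi$, $\xi\in\mathbb{U}$. Since $F(\xi x_0)=g(\xi x_0)\xi x_0$ and $T_{x_0}(x_0)=1$, we have $f(\xi)=T_{x_0}(F(\xi x_0))$. By Lemma~\ref{L2}, $f\in\mathcal{S}^*(\alpha)$. By Lemma~\ref{L4} (more precisely, the identities in its proof), the Taylor coefficients of $f$ satisfy $a_2=A_2$ and $a_3=A_3$. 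Therefore
\[
H_{2,1}(F)=A_3-A_2^2=a_3-a_2^2=H_{2,1}(f),
\]
and Theorem~B gives $|H_{2,1}(F)|\le 1-\alpha$.

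As a cross-check, one can also argue from the explicit formulas \eqref{A234}:
\[
A_3-A_2^2=\tfrac12(1-\alpha)\bigl(c_2-(1-\alpha)c_1^2\bigr).
\]
The classical lemma $|c_2-\mu c_1^2|\le 2\max\{1,|2\mu-1|\}$ with $\mu=1-\alpha\in(0,1]$ gives $|2\mu-1|\le1$, hence $|c_2-\mu c_1^2|\le 2$ and again $|H_{2,1}(F)|\le 1-\alpha$. This route is not needed given Theorem~B, but it confirms that the bound is consistent.

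For sharpness I would reuse the mapping \eqref{ex1}, i.e.\ $F(x)=(1-(T_{x_0}(x))^2)^{-(1-\alpha)}x$. By Lemma~\ref{L1} it belongs to $\mathcal{S}^*_\mathbb{B}(\alpha)$, since the corresponding $f(z)=z(1-z^2)^{-(1-\alpha)}$ is in $\mathcal{S}^*(\alpha)$. By \eqref{Q1} it satisfies $A_2=0$ and $A_3=1-\alpha$. Hence $|H_{2,1}(F)|=|A_3-A_2^2|=1-\alpha$, so equality holds.

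I do not expect a real obstacle. The only point needing care is verifying that the $A_n$ defined by \eqref{eq:An}, with the chosen $T_{x_0}$, coincide with the Taylor coefficients of the slice function, and this is exactly what Lemma~\ref{L4} provides.
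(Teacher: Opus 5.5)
Your proposal is correct and essentially matches the paper's proof: the paper fixes $x_0$ and $T_{x_0}$, uses Lemma~\ref{L4} (with the slice $f(\xi)=g(\xi x_0)\xi\in\mathcal{S}^*(\alpha)$ from Lemma~\ref{L2}) to get $H_{2,1}(F)=H_{2,1}(f)$, applies Theorem~B, and proves sharpness with the mapping \eqref{ex1}, for which $A_2=0$ and $A_3=1-\alpha$. Your extra cross-check via $A_3-A_2^2=\tfrac12(1-\alpha)\bigl(c_2-(1-\alpha)c_1^2\bigr)$ and $|c_2-\mu c_1^2|\le 2$ for $\mu\in(0,1]$ is also correct, though not needed.
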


\begin{proof}
 Fix $x_0\in \partial{\mathbb{B}}$ and let $T_{x_0}\in T(x_0)$. Then applying Lemma \ref{L4} we deduce that

\[
|H_{2,1}(F)|
=
|H_{2,1}(f)|.
\]
Applying Theorem~B (see \cite[Theorem 2.3]{CKKLS2018}),
we obtain
\[
|H_{2,1}(F)| \leq 1-\alpha.\]

This completes the proof.
\end{proof}

The following example demonstrates that the estimate obtained in Theorem \ref{T1} is sharp.

\begin{exm}
Consider the mapping $F$ define in (\ref{ex1}). 
Therefore from (\ref{Q1})) we have 
\[
|H_{2,1}(F)|
=
\left|A_3-A_2^2\right|
=(1-\alpha).
\]

Thus, the upper bound obtained in Theorem \ref{T2} is attained, showing that the estimate is sharp.
\end{exm}

\medskip
Next, removing the restrictive assumption $F(x)=g(x)x$, we generalize Theorem~B to higher dimensions under weaker assumptions \eqref{cc1} than those of Theorem \ref{T2}. 
\begin{theo}\label{T2.1}
Let $F$ be a locally biholomorphic mapping on $\mathbb{B}$, and suppose that $F$ satisfies the assumption \eqref{cc1}. If $F\in\mathcal{S}^*_\mathbb{B}(\alpha)$, $0\leq \alpha <1$, then for every $x_0\in X$ with $\|x_0\|=1$, we have
\[
|H_{2,1}(F)|=|A_3-A_2^2|\leq 1-\alpha,
\]
 where $A_1=1$, and $A_2$ and $A_3$ are defined by
\eqref{eq:An}, respectively. Moreover, the above estimate is sharp.
\end{theo}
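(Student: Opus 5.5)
We prove Theorem~\ref{T2.1} by reducing to the Carath\'eodory coefficients through Lemma~\ref{L5}, in the same way as Theorem~\ref{T1.1}. Fix $x_0\in\partial\mathbb{B}$ and $T_{x_0}\in T(x_0)$. Since $F$ satisfies \eqref{cc1} and $F\in\mathcal{S}^*_\mathbb{B}(\alpha)$, Lemma~\ref{L5} gives a function $p\in\mathcal{P}$ with coefficients $c_1,c_2$ from \eqref{eq:P}. This $p$ is built from $\phi(\xi)=\xi/T_{x_0}(\varphi(\xi x_0))$ through \eqref{kp1}, and it satisfies
\[
A_2=(1-\alpha)c_1,\qquad A_3=\tfrac12(1-\alpha)\bigl(c_2+(1-\alpha)c_1^2\bigr).
\]
These are exactly the formulas of Lemma~\ref{L3}, so $H_{2,1}(F)$ becomes a function of $c_1,c_2$ alone:
\[
A_3-A_2^2=\frac{1-\alpha}{2}\Bigl(c_2-(1-\alpha)c_1^2\Bigr).
\]

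It therefore suffices to show $|c_2-\mu c_1^2|\le 2$ for $p\in\mathcal{P}$ and $\mu=1-\alpha\in(0,1]$. We use the classical representation $c_2=\tfrac12\bigl(c_1^2+(4-c_1^2)\zeta\bigr)$ with $|\zeta|\le1$, together with $|c_1|\le2$. Then
\[
\bigl|c_2-\mu c_1^2\bigr|\le \bigl|\tfrac12-\mu\bigr|\,|c_1|^2+\tfrac12\bigl(4-|c_1|^2\bigr)=2-\Bigl(\tfrac12-\bigl|\tfrac12-\mu\bigr|\Bigr)|c_1|^2\le 2,
\]
because $|\tfrac12-\mu|\le\tfrac12$ for $\mu\in[0,1]$. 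Equivalently, one can invoke the known Fekete--Szeg\H{o} inequality $|c_2-\mu c_1^2|\le 2\max\{1,|2\mu-1|\}$. Either way we obtain $|A_3-A_2^2|\le 1-\alpha$.

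For sharpness, the mapping \eqref{ex1} has the form $F(x)=g(x)x$, so it satisfies \eqref{cc1} with $H_{F,k}$ equal to the degree-$k$ homogeneous part of $g$. By Lemma~\ref{L1} it lies in $\mathcal{S}^*_\mathbb{B}(\alpha)$, and \eqref{Q1} gives $A_2=0$ and $A_3=1-\alpha$. Hence $|H_{2,1}(F)|=1-\alpha$, as already observed for Theorem~\ref{T2}.

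The main obstacle is the first step, namely that Lemma~\ref{L5} legitimately yields $A_2$ and $A_3$ in terms of a single $p\in\mathcal{P}$. This needs two things: $\phi$ must be holomorphic with $\operatorname{Re}\phi>\alpha$, which uses $T_{\xi x_0}=\tfrac{|\xi|}{\xi}T_{x_0}$ and \eqref{d1}; and the expansion of $(DF(x))^{-1}F(x)$ must combine with \eqref{cc1} so that each $D^k\varphi(0)(x_0^k)$ is a scalar multiple of $x_0$. Only then does applying $T_{x_0}$ collapse everything to scalars. Once that lemma is accepted, the rest is the one-variable estimate above.
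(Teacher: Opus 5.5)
Your proposal is correct and follows the paper's route. Lemma~\ref{L5} turns $A_3-A_2^2$ into $\frac{1-\alpha}{2}\bigl(c_2-(1-\alpha)c_1^2\bigr)$, and the one-variable Fekete--Szeg\H{o} bound on $\mathcal{P}$ gives $1-\alpha$; this is what the paper's ``proceeding analogously to Theorem~B'' amounts to. Sharpness comes from \eqref{ex1}, which does satisfy \eqref{cc1} because it has the form $g(x)x$.

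Two cosmetic fixes:
\begin{enumerate}
\item For complex $c_1$ the representation should read $c_2=\tfrac12\bigl(c_1^2+(4-|c_1|^2)\zeta\bigr)$, or you should first rotate so that $c_1\ge 0$. This is the form your estimate actually uses.
\item $\operatorname{Re}\phi>\alpha$ follows from the disk form of the definition, i.e.\ $\operatorname{Re}\bigl\{\|x\|/T_x\bigl((DF(x))^{-1}F(x)\bigr)\bigr\}>\alpha$. It does not follow from \eqref{d1} as literally printed, which would only give $\operatorname{Re}(1/\phi)>\alpha$.
\end{enumerate}
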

\begin{proof} Fix $x_0\in \partial{\mathbb{B}}$ and let $T_{x_0}\in T(x_0)$. Next, applying Lemma \ref{L5} and proceeding analogously to the proofs of Theorems B (see \cite[Theorem 2.5]{CKKLS2018}), we deduce that
\begin{align*}
|T_{2,1}(F)|\leq 1-\alpha.
\end{align*}
 The sharpness of the result is an immediate consequence of Theorem \ref{T2}. This completes the proof.\end{proof}
 \begin{rem}
Setting $\alpha=0$ in Theorems~\ref{T2} and~\ref{T2.1}, we obtain the sharp estimate
\[
|H_{2,1}(F)|\leq 1
\]
for the class $\mathcal{S}_{\mathbb{B}}^{*}$ of starlike mappings on the unit ball of a complex Banach space.

In particular, when $\mathbb{B}=\mathbb{U}$ and $X=\mathbb{C}$, the class $\mathcal{S}_{\mathbb{B}}^{*}$ reduces to the classical class $\mathcal{S}^{*}$ of starlike univalent functions on the unit disk, and the above estimate becomes
\[
|H_{2,1}(f)|\leq 1,
\]
which coincides with the sharp Fekete--Szeg\"o inequality for the class $\mathcal{S}^{*}$.

Furthermore, under the specialization $\mathbb{B}=\mathbb{U}$ and $X=\mathbb{C}$, Theorems~\ref{T2} and~\ref{T2.1} reduce precisely to Theorem~B of Cho \emph{et al.}~\cite[Theorem~2.3]{CKKLS2018}. Consequently, our results extend the corresponding one-dimensional theorem to the framework of complex Banach spaces.
\end{rem}
 
 \section{\texorpdfstring{\bf The Zalcman Functional $J_{2,3}(F)$ for the Class $\mathcal{S}^*_\mathbb{B}(\alpha)$}{The Zalcman Functional}}

In this section, we investigate the Zalcman functional $J_{2,3}(F)$ for the class $\mathcal{S}^*_\mathbb{B}(\alpha)$. We derive sharp upper bounds for this functional and identify the corresponding extremal mappings, thereby establishing the sharpness of the obtained results.
\begin{theo}\label{T3}
Let $g\in H(\mathbb{B},\mathbb{C})$ satisfy $g(0)=1$, and define $F(x)=g(x)x, x\in\mathbb{B}.$
Suppose that $F\in\mathcal{S}^*_\mathbb{B}(\alpha)$, $0\leq \alpha <1$. Then, for every
$x_{0}\in X$ with $\|x_{0}\|=1$,
\[
|J_{2,3}(F)|\leq \begin{cases} \frac{2}{3}(1-\alpha)[4(1-\alpha)^2-1], \;\text{for}\; & 0\leq \alpha\leq  1-\frac{\sqrt 3}{2}\\
         \frac{2(1-\alpha)}{3\sqrt{1-(1-\alpha)^2}} \;\text{for}\; &  1-\frac{\sqrt 3}{2}<\alpha <1
\end{cases},
\]
where $J_{2,3}(F)=A_2A_3-A_4$,  $A_{2},A_{3},A_{4}$ defined by
\eqref{eq:An}. Moreover, the estimate is sharp.
\end{theo}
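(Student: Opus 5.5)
The plan is to reduce Theorem~\ref{T3} to the one-variable Theorem~C, as was done for Theorems~\ref{T1} and~\ref{T2}. Fix $x_0\in\partial\mathbb{B}$ and $T_{x_0}\in T(x_0)$, and set
\[
f(\xi)=g(\xi x_0)\,\xi,\qquad \xi\in\mathbb{U}.
\]
By Lemma~\ref{L2}, the hypothesis $F\in\mathcal{S}^*_\mathbb{B}(\alpha)$ gives $f\in\mathcal{S}^*(\alpha)$. Since $T_{x_0}(x_0)=1$, we have $f(\xi)=T_{x_0}(F(\xi x_0))$. Differentiating at $0$, exactly as in the proof of Lemma~\ref{L4}, yields $a_n=A_n$ for $n=2,3,4$. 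Hence
\[
J_{2,3}(F)=A_2A_3-A_4=a_2a_3-a_4=J_{2,3}(f).
\]
Theorem~C applied to $f$ then gives the stated two-case bound immediately. One could instead use the explicit formulas (\ref{A234}) and redo the Carath\'eodory-class optimisation of \cite{CKKLS2018} in $c_1,c_2,c_3$. The identification $a_n=A_n$ makes this unnecessary.

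For sharpness, I would transplant the one-variable extremal functions via Lemma~\ref{L1}. Given an extremal $f^*\in\mathcal{S}^*(\alpha)$ from Theorem~C, define
\[
F^*(x)=\frac{f^*(T_{x_0}(x))}{T_{x_0}(x)}\,x .
\]
Lemma~\ref{L1} gives $F^*\in\mathcal{S}^*_\mathbb{B}(\alpha)$, and $F^*$ has the form $g(x)x$. Expanding $f^*(\xi)/\xi=1+a_2\xi+a_3\xi^2+\cdots$, one finds
\[
\frac{D^{k+1}F^*(0)(x^{k+1})}{(k+1)!}=a_{k+1}\,(T_{x_0}(x))^k\,x .
\]
Therefore $A_n=a_n(f^*)$, so $|J_{2,3}(F^*)|=|J_{2,3}(f^*)|$ attains the bound.

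The main obstacle is writing down the extremal $f^*$ explicitly in each range of $\alpha$, since Theorem~C is only quoted.
\begin{itemize}
\item For $0\le\alpha\le 1-\frac{\sqrt3}{2}$, I expect the extremal to come from $p(z)=\frac{1+z}{1-z}$. This gives $f^*(z)=z(1-z)^{-2(1-\alpha)}$, and I would check directly that its coefficients give $|a_2a_3-a_4|=\frac23(1-\alpha)[4(1-\alpha)^2-1]$.
\item For $1-\frac{\sqrt3}{2}<\alpha<1$, the extremal should come from a Carath\'eodory function with $c_2$ suitably chosen and $c_1=2t$, where $t$ is an interior critical point. A natural candidate is $p(z)=\frac{1-z^2}{1-2tz+z^2}$, a convex combination of two rotations of the half-plane map, with $t$ tuned. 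I would determine $t$ from the condition that the critical value equals $\frac{2(1-\alpha)}{3\sqrt{1-(1-\alpha)^2}}$. I would then form $f^*(z)=z\exp\int_0^z\frac{(1-\alpha)(p(\zeta)-1)}{\zeta}\,d\zeta$.
\end{itemize}
If the explicit extremal in the second case proves messy, it suffices to cite that Theorem~C is sharp, so some extremal $f^*$ exists, and apply the transplantation above.
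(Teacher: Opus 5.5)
Your proposal is correct and follows the paper's proof. You reduce to $f(\xi)=g(\xi x_0)\xi$ via Lemma~\ref{L2} (packaged in the paper as Lemma~\ref{L4}) so that $J_{2,3}(F)=J_{2,3}(f)$, apply Theorem~C, and transplant the one-variable extremals through Lemma~\ref{L1}. In the second range the paper fixes your parameter as the critical point $t=\frac{1}{2\sqrt{1-(1-\alpha)^2}}$ of $6t-8\bigl(1-(1-\alpha)^2\bigr)t^3$, which gives $f^*(z)=z(1-2tz+z^2)^{-(1-\alpha)}$; this is exactly the paper's extremal mapping $F(x)=x/\bigl(1-aT_{x_0}(x)+(T_{x_0}(x))^2\bigr)^{1-\alpha}$ with $a=2t$.
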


\begin{proof}
 Fix $x_0\in \partial{\mathbb{B}}$ and let $T_{x_0}\in T(x_0)$. Then applying Lemma \ref{L4} we deduce that

\[
|J_{2,3}(F)|
=
|J_{2,3}(f)|.
\]
Applying Theorem~C (see \cite[Theorem 2.1]{CKKLS2018}),
we obtain
\[
|J_{2,3}(F)| \leq \begin{cases} \frac{2}{3}(1-\alpha)[4(1-\alpha)^2-1], \;\text{for}\; & 0\leq \alpha\leq  1-\frac{\sqrt 3}{2}\\
         \frac{2(1-\alpha)}{3\sqrt{1-(1-\alpha)^2}} \;\text{for}\; &  1-\frac{\sqrt 3}{2}<\alpha <1
\end{cases}.\]

This completes the proof.
\end{proof}

The following example demonstrates that the estimate obtained in Theorem \ref{T3} is sharp.

\begin{exm} Suppose that $0\leq \alpha\leq  1-\frac{\sqrt 3}{2}$.
Consider the mapping
\bea\label{ex3}
F(x)=
\frac{1}{\left(1-T_{x_0}(x)\right)^{2(1-\alpha)}}\,x,
\qquad
x_0\in\partial\mathbb{B},\;
T_{x_0}\in T(x_0).
\eea
By Lemma \ref{L1}, we have $F\in\mathcal{S}^*_\mathbb{B}(\alpha).$
A simple calculation shows that
\beas
\frac{D^2F(0)(x^2)}{2!}&=&2(1-\alpha)\bigl(T_{x_0}(x)\bigr)x,\\
\frac{D^3F(0)(x^3)}{3!}
&=&(1-\alpha)(3-2\alpha)
\bigl(T_{x_0}(x)\bigr)^2x\\
\text{and}\;\;\;
\frac{D^4F(0)(x^4)}{4!}&=&\frac{2(1-\alpha)(3-2\alpha)(2-\alpha)}{3}
\bigl(T_{x_0}(x)\bigr)^3x.
\eeas

Hence,
\[
A_2=2(1-\alpha),\;\;
A_3=(1-\alpha)(3-2\alpha),\;\;\text{and}\;\;
A_4=\frac{2(1-\alpha)(3-2\alpha)(2-\alpha)}{3}.
\]
Therefore,
\[
|J_{2,3}(F)|
=
\left|A_2A_3-A_4\right|
=\frac{2}{3}(1-\alpha)[4(1-\alpha)^2-1].
\]

Next suppose that $ 1-\frac{\sqrt 3}{2}<\alpha<1$.
Consider the mapping
\bea\label{ex3}
F(x)=
\frac{1}{\left(1-aT_{x_0}(x)+(T_{x_0}(x))^2 \right)^{1-\alpha}}\,x,
\qquad
x_0\in\partial\mathbb{B},\;
T_{x_0}\in T(x_0),
\eea
where $a=\frac{1}{\sqrt{1-(1-\alpha)^2}}$.
By Lemma \ref{L1}, we have $F\in\mathcal{S}^*_\mathbb{B}(\alpha).$
Computing directly, we obtain
\beas
\frac{D^2F(0)(x^2)}{2!}&=&\frac{1-\alpha}{\sqrt{\alpha(2-\alpha)}}\bigl(T_{x_0}(x)\bigr)x,\\
\frac{D^3F(0)(x^3)}{3!}
&=&\frac{(1-\alpha)(1-2\alpha)}{2\alpha}
\bigl(T_{x_0}(x)\bigr)^2x\\
\text{and}\;\;\;
\frac{D^4F(0)(x^4)}{4!}&=&\frac{(1-\alpha)(2-\alpha)(3-13\alpha+6\alpha^2)}{6\bigl(\alpha(2-\alpha)\bigr)^{3/2}}\bigl(T_{x_0}(x)\bigr)^3x.
\eeas

Hence,
\[
A_2=\frac{1-\alpha}{\sqrt{\alpha(2-\alpha)}},\;\;
A_3=\frac{(1-\alpha)(1-2\alpha)}{2\alpha},\;\;\text{and}\;\;
A_4=\frac{(1-\alpha)(2-\alpha)(3-13\alpha+6\alpha^2)}{6\bigl(\alpha(2-\alpha)\bigr)^{3/2}}.
\]
Therefore,
\[
|J_{2,3}(F)|
=
\left|A_2A_3-A_4\right|
=\frac{2(1-\alpha)}{3\sqrt{1-(1-\alpha)^2}}.
\]

Thus, the upper bound obtained in Theorem \ref{T3} is attained, showing that the estimate is sharp.
\end{exm}

\medskip
Next, by removing the restrictive assumption $F(x)=g(x)x$, we generalize Theorem~C to higher dimensions under the weaker assumption \eqref{cc1}, thereby relaxing the assumptions of Theorem~\ref{T3}.
\begin{theo}\label{T3.1}
Let $F$ be a locally biholomorphic mapping on $\mathbb{B}$, and suppose that $F$ satisfies the assumption \eqref{cc1}. If $F\in\mathcal{S}^*_\mathbb{B}(\alpha)$, $0\leq \alpha <1$, then for every $x_0\in X$ with $\|x_0\|=1$, we have
\[
|J_{2,3}(F)| \leq \begin{cases} \frac{2}{3}(1-\alpha)[4(1-\alpha)^2-1], \;\text{for}\; & 0\leq \alpha\leq  1-\frac{\sqrt 3}{2}\\
         \frac{2(1-\alpha)}{3\sqrt{1-(1-\alpha)^2}} \;\text{for}\; &  1-\frac{\sqrt 3}{2}<\alpha <1
\end{cases}.\]
  Moreover, the above estimate is sharp.
\end{theo}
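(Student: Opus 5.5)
The plan is to reduce Theorem~\ref{T3.1} to the one-variable estimate of Theorem~C. Fix $x_0\in\partial\mathbb{B}$ and $T_{x_0}\in T(x_0)$. Lemma~\ref{L5} applies under assumption \eqref{cc1} and $F\in\mathcal{S}^*_\mathbb{B}(\alpha)$. It produces a Carath\'eodory function $p\in\mathcal{P}$ with coefficients $c_1,c_2,c_3$, namely $p=(\phi-\alpha)/(1-\alpha)$ with $\phi(\xi)=\xi/T_{x_0}(\varphi(\xi x_0))$. In terms of these coefficients, $A_2,A_3,A_4$ are given by exactly the formulas \eqref{A234}. These coincide with the formulas of Lemma~\ref{L3} for $a_2,a_3,a_4$ of a function in $\mathcal{S}^*(\alpha)$. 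Hence
\[
J_{2,3}(F)=A_2A_3-A_4=\Psi_\alpha(c_1,c_2,c_3),
\]
where $\Psi_\alpha$ is the same polynomial expression that Cho \emph{et al.} bound in proving Theorem~C. Explicitly,
\[
\Psi_\alpha=\frac{1-\alpha}{6}\Big(-2c_3+3(1-\alpha)c_1c_2+2(1-\alpha)^2c_1^3\Big).
\]

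The key observation is that the proof of Theorem~C in \cite{CKKLS2018} uses only the following: $(c_1,c_2,c_3)$ are the first three coefficients of some $p\in\mathcal{P}$. They then parametrize them through the Libera--Z\l otkiewicz type formulas $2c_2=c_1^2+(4-c_1^2)\zeta$, $4c_3=c_1^3+2(4-c_1^2)c_1\zeta-(4-c_1^2)c_1\zeta^2+2(4-c_1^2)(1-|\zeta|^2)\eta$ with $c_1\in[0,2]$ after rotation and $|\zeta|,|\eta|\le1$, and maximize over these parameters. No other property of $f$ is used. Conversely, every $p\in\mathcal{P}$ arises from some $f\in\mathcal{S}^*(\alpha)$ via $zf'/f=\alpha+(1-\alpha)p$. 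So the supremum of $|\Psi_\alpha|$ over $\mathcal{P}$ equals the sharp bound of Theorem~C. I would therefore argue directly: given the $p$ from Lemma~\ref{L5}, define $f$ by $zf'(z)/f(z)=\alpha+(1-\alpha)p(z)$. Then $f\in\mathcal{S}^*(\alpha)$, and by Lemma~\ref{L3} its coefficients satisfy $a_k=A_k$ for $k=2,3,4$. Theorem~C applied to $f$ then yields the stated two-case bound for $|J_{2,3}(F)|$.

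The only delicate point is the rotation step, since $T_{x_0}$ is fixed. Replacing $x_0$ by $e^{i\theta}x_0$ changes the $c_k$ by $e^{ik\theta}$. This multiplies $J_{2,3}$ by a unimodular factor, so the modulus is unaffected. Equivalently, the issue is absorbed into the one-variable $f$ construction, where Theorem~C already holds for all $f$.

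Sharpness follows from the extremal mappings in the example after Theorem~\ref{T3}. These have the form $g(x)x$ with $g(x)=\psi(T_{x_0}(x))$, so their derivatives are $D^{k+1}F(0)(x^{k+1})/(k+1)!=\beta_k\,(T_{x_0}(x))^k x$. This is exactly \eqref{cc1} with $H_{F,k}(x)=\beta_k T_{x_0}(x)^k$, a homogeneous polynomial of degree $k$. These maps lie in $\mathcal{S}^*_\mathbb{B}(\alpha)$ by Lemma~\ref{L1} and are locally biholomorphic, and they attain the bound in each range of $\alpha$. I expect the main obstacle to be the bookkeeping in Lemma~\ref{L5}: confirming that $A_4$ really equals the $a_4$ formula of Lemma~\ref{L3} (the printed $(1-\alpha)^2c_1$ should read $(1-\alpha)^2c_1^3$). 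Once that identification holds, the rest is a direct transfer from Theorem~C.
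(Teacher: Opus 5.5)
Your proposal is correct and follows the paper's route: Lemma~\ref{L5} expresses $A_2,A_3,A_4$ through the coefficients of a Carath\'eodory function, and the bound is then transferred from Theorem~C. Your explicit construction of $f$ via $zf'(z)/f(z)=\alpha+(1-\alpha)p(z)$ turns the paper's ``arguing as in the proof of Theorem~C'' into a clean black-box application. One harmless slip: the $c_1c_2$ terms cancel, so in fact $J_{2,3}(F)=\frac{1-\alpha}{3}\bigl((1-\alpha)^2c_1^3-c_3\bigr)$ rather than the $\Psi_\alpha$ you wrote, but your argument never uses that explicit formula.
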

\begin{proof} Fix $x_0\in \partial\mathbb{B}$ and let $T_{x_0}\in T(x_0)$. Applying Lemma~\ref{L5} and arguing as in the proof of Theorem C (cf. \cite[Theorem~2.1]{CKKLS2018}), we obtain
\[
|J_{2,3}(F)| \leq \begin{cases} \frac{2}{3}(1-\alpha)[4(1-\alpha)^2-1], \;\text{for}\; & 0\leq \alpha\leq  1-\frac{\sqrt 3}{2}\\
         \frac{2(1-\alpha)}{3\sqrt{1-(1-\alpha)^2}} \;\text{for}\; &  1-\frac{\sqrt 3}{2}<\alpha <1
\end{cases}.\]
The sharpness follows immediately from Theorem~\ref{T3}. This completes the proof.\end{proof}
\begin{rem}
Setting $\alpha=0$ in Theorems~\ref{T3} and \ref{T3.1}, we obtain the sharp estimate $|J_{2,3}(F)|\leq 2$
 for the class $\mathcal{S}_{\mathbb{B}}^{*}$ of starlike mappings on the unit ball of a complex Banach space.
 
 In particular, when $\mathbb{B}=\mathbb{U}$ and $X=\mathbb{C}$, the above estimate reduces to
$|J_{2,3}(f)|\leq 2,$
which is precisely the classical sharp bound for the Zalcman functional in the class $\mathcal{S}^{*}$ of starlike univalent functions on the unit disk.

Moreover, in the one-dimensional setting $\mathbb{B}=\mathbb{U}$ and $X=\mathbb{C}$, Theorems~\ref{T3} and~\ref{T3.1} reduce precisely to Theorem~C of Cho \emph{et al.}~\cite[Theorem~2.1]{CKKLS2018}.

\end{rem}

\section*{{\bf Declarations}}
\subsection*{Funding}
The second author acknowledge financial support from the Council of Scientific and Industrial Research (CSIR), New Delhi, India, under Grant Nos. 09/1224(16975)/2023-EMR-I.
\subsection*{Data Availability Statement}
Data sharing is not applicable to this article as no datasets were generated or analyzed during the current study.
\subsection*{Conflict of Interest}
The authors declare that they have no conflict of interest. 
\subsection*{Author Contributions}
Both authors contributed equally to this work.

\end{document}